\documentclass[11pt]{article}
\usepackage{graphicx} 
\usepackage{amsmath,mathtools,amsthm,fancyhdr,amssymb,bbm, enumerate,float, mathrsfs}
\usepackage{tcolorbox}
\usepackage{appendix}
\tcbuselibrary{breakable}
\usepackage{listings}
\usepackage{color, comment}
\usepackage{accents}
\usepackage{hyperref}
\usepackage{url}
\usepackage{indentfirst}
\hypersetup{
colorlinks,
linkcolor=blue,
anchorcolor=blue,
citecolor=blue
}

\newcommand{\PV}{\mathrm{P.V.}}
\newcommand{\Om}{\Omega}

\newcommand{\m}{\tilde{m}}

\newcommand{\tla}{\tilde{\lambda}}
\newcommand{\TF}{\tilde{F}}
\newcommand{\TE}{\tilde{E}}

\mathtoolsset{showonlyrefs,showmanualtags} 

\newtheorem{proposition}{Proposition}[section]
\newtheorem{lemma}{Lemma}[section]

\newtheorem{theorem}{Theorem}[section]
\newtheorem{corollary}{Corollary}[section]
\theoremstyle{definition}
\newtheorem{remark}{Remark}[section]
\numberwithin{equation}{section}

\usepackage{enumitem}

\title{Global in Time Estimates for Multi-phase Muskat Problem}
\author{Zirui Wang\footnote{School of Mathematical Sciences, Peking University. Email: \href{zrwang@stu.pku.edu.cn}{zrwang@stu.pku.edu.cn}}}
\date{}

\begin{document}

\maketitle

\begin{abstract}
    We establish global-in-time decay estimates for the multi-phase Muskat problem in the case where the density takes exactly \(n+1\) distinct constant values. We first linearize the system around a flat stable configuration, followed by the study of associated linearized operator. The asymptotic behavior at low frequencies of eigenvalues yields the decay rate of $(1+t)^{-s/2-1/4}$ for Wiener norm $\|f\|_s$, which is slower than the classical case, where the decay rate is $(1+t)^{-s+\nu}$. Afterwards we bound the nonlinear term to close the argument.
\end{abstract}

\section{Introduction}
We consider the multi-phase Muskat problem in 2D. In particular, it describes the motion of fluids in incompressible porous media (IPM) \cite{Bear1975DynamicsOF,10.1063/1.1710292} when the density takes exactly 
$n+1$ distinct constant values. The density is carried by a viscous incompressible flow governed by Darcy's law in the field of gravity:
\begin{align}\label{IPM equation}
    \partial_t\rho+u\cdot\nabla\rho=0,\quad u+\nabla p=-(0,\rho),\quad \text{div}u=0,
\end{align}
where gravity points downward in the $y$ direction. Here $\rho$ represents the density, $u$ is the velocity field, $p$ is the pressure. And we have normalized dynamic viscosity, the permeability of the medium, and the gravitational constant in Darcy's law to unity. This equation is an active scalar, where the velocity has the same level of regularity as $\rho$. In $\mathbb{R}^2$ this can be seen from the following Biot-Savart law
\begin{equation}\label{BS Law}
    u=\nabla^{\perp}(-\Delta)^{-1}\partial_x\rho,\quad\nabla^{\perp}=(-\partial_y,\partial_x).
\end{equation}

There have been numerous results concerning the well-posedness of classical Muskat problem ($n=1$). In this case the interface can be denoted by a function $f(x,t)$ satisfies the following
\begin{equation}
    f_t=\frac{\rho_0-\rho_1}{2\pi}\PV\int\frac{y\left(f_x(x)-f_x(x-y)\right)}{y^2+\left(f(x)-f(x-y)\right)^2}dy.
\end{equation}
Here the density $\rho_1$ lies above $\rho_0$. A first approach is to linearize this equation near $f=0$, which leads to
\begin{equation}
    f_t=\frac{\rho_1-\rho_0}{2}\Lambda f
\end{equation}
where $\Lambda f$ is defined as 
$\widehat{\Lambda f}(\xi)=|\xi|\widehat{f}(\xi)$. This shows that in order to obtain stable regime we need $\rho_1<\rho_0$.

In this classical setting, local well-posedness has been established for initial data in $H^3$ in \cite{CordobaContourDynamics}, which was later improved to $H^2$ in \cite{CHENG201632}. Further local results were obtained for the case with viscosity jump in \cite{CCG08,CCG13}, as well as for small or monotone initial data in \cite{Deng2016OnT2,SCRH04}. For the half-plane setting, local well-posedness was established in \cite{Zlato2024The2M} recently. We also mention here that $H^{\frac{3}{2}}$ is in fact a critical space in \cite{AQ22}. For global results, well-posedness was given in \cite{Constantin2010OnTG} when the Wiener norm of initial data $\|f_0\|_1$ is less than $1/3$ for 2D and $1/5$ for 3D (see also \cite{CCGS13}). Based on this, large time decay rate was given in \cite{Patel03062017} of exact $(1+t)^{-s+\nu}$ for Wiener norm $\|f\|_s$. In \cite{CGSV15}, the authors proved local well-posedness for initial data in $W^{2,p},p\in[1,\infty)$, and further gained global well-posedness given the smallness of the slope of initial surface. Besides, global regularity results in critical space $\dot{W}^{1,\infty}$ for small data is given in \cite{Cameron19,Cameron2020GlobalWF}. We also mention related works on self-similar solutions in 2D and 3D \cite{JSNP21,Na2025GlobalSS} as well as singularity formation \cite{CCFG12,Castro2011RayleighTaylorBF,GomezSerrano2013OnTW,Zlato2024The2M111,Zlato2024The2M}, while the question of finite-time singularity formation in the stable regime with bounded slope remains open.

For the multi-phase Muskat problem, local well-posedness has been established in \cite{Bierler2021TheMM} for the case of two interfaces, and later extended in \cite{Bierler2025TheMM} to the setting with different viscosities. In addition, the interfaces are known not to touch each other, which rules out the formation of squirt singularities, see\cite{Crdoba2009AbsenceOS,Gancedo2013AbsenceOS}. This is different from one-phase Muskat problem in which such blowup exists \cite{Castro2013SplashSF,Crdoba2018OnTS}. Very recently, uniform lifespan for local solutions of three-phase Muskat problem was established in \cite{Castro2025ThreephaseMP}. In contrast to the extensive theory available for the classical Muskat problem, the global theory for the multi-phase setting remains less developed. In this paper, we study the large-time decay of global solutions to the multi-phase Muskat problem. We begin by introducing the system of equations in the following.

\subsection{Setting of the problem}
We consider the fluids characterized by $n+1$ different constant densities $\rho_j$ for $j=0,1,\dots,n$. Thus the density is represented by
\begin{equation}\label{def rho}
    \rho(\textbf{x},t)=\rho_j,\quad \text{if}\quad \textbf{x}\in\Om_j,\quad j=0,1,\dots,n,
\end{equation}
where $\Om_j$ are connected regions separated by $n$ interfaces which are characterized by the functions $y=f_j(x,t),j=1,2,\dots,n$. To be specific the regions are defined as
\begin{equation}
    \Om_j=\{\textbf{x}=(x,y)\in\mathbb{R}^2|f_j(x,t)<y<f_{j+1}(x,t)\},\quad j=0,1,\dots,n
\end{equation}
with $f_0(x,t)=-\infty,f_{n+1}(x,t)=+\infty$. We assume that the interfaces are ordered and remain separated, namely
\begin{equation}
    f_k(x,t)<f_j(x,t),\quad\text{for all}\quad k<j.
\end{equation}
Substituting the above ansatz into the IPM system \eqref{IPM equation} we are able to derive the equation for all interfaces as below
\begin{equation}\label{MPM equation}
    \partial_tf_k=-\frac{1}{2\pi}\sum_{j=1}^n\PV(\rho_{j-1}-\rho_j) \int_{\mathbb{R}}\frac{y\left(\partial_xf_j(x-y,t)-\partial_xf_k(x,t)\right)}{y^2+\left(f_j(x-y,t)-f_k(x,t)\right)^2}dy,\quad k=1,2,\dots,n.
\end{equation}
Here, each term in the summation represents the contribution of the \(j\)-th interface to the evolution of the \(k\)-th interface. In other words, the dynamics of \(f_k\) are determined by the nonlocal interactions with all interfaces in the system, weighted by the corresponding density jumps \(\rho_{j-1}-\rho_j\). We left the detailed derivations in Section \ref{Preliminaries}. 
We remark for system \eqref{MPM equation} in order to work in the stable regime and ensure well-posedness, we assume that the lighter fluid lies above the denser one as in \cite{CordobaContourDynamics}:
\begin{equation}
    \rho_k>\rho_j,\quad \text{for all}\quad k<j.
\end{equation}
And the problem is ill-posed if this condition fails, see \cite{CordobaContourDynamics}. We now observe, as in the classical case, that flat interfaces form a stationary solution to \eqref{MPM equation}. Thus, in this stable regime, it is natural to study the dynamics near such a solution.
Motivated by this, we consider perturbations of a flat layered configuration. More precisely, we assume that the \(k\)-th interface is a small perturbation of the horizontal line \(y=d_k\), where
\[
d_k<d_j \qquad \text{for } k<j.
\]
Accordingly, we introduce the perturbation variables
\begin{equation}\label{def g}
    g_k(x,t)=f_k(x,t)-d_k,
\end{equation}
and assume that \(g_k\) remains small in a suitable sense. In the following context, we treat $\rho_k,d_k$ as fixed constants that do not change in our analysis.

We next linearize the system around the steady state. In Fourier variables, the resulting linearized dynamics for the perturbation vector $\vec{g}(x,t)=(g_1(x,t),\dots,g_n(x,t))^{T}$ takes the form
\begin{equation}
    \partial_t\hat{\vec{g}}(\xi,t)=-|\xi|A(\xi)\hat{\vec{g}}(\xi,t),
\end{equation}
where the $n\times n$ matrix $A$ is defined as
\begin{equation}\label{def matrix A}
    A(\xi)=\left(a_{kj}(\xi)\right)_{n\times n},\quad a_{kj}(\xi)=\frac{\rho_{j-1}-\rho_j}{2}e^{-|d_k-d_j||\xi|}.
\end{equation}
The study of the linear matrix \eqref{def matrix A} plays a key role in our analysis and the computation of exact decay rate. We left the analysis in Section \ref{Sec Linear}.

\subsection{Notations}
We consider the following Wiener norm introduced in \cite{Constantin2010OnTG}
\begin{equation*}
    \|f\|_s:=\int_{\mathbb{R}}|\xi|^s|\hat{f}(\xi)|d\xi,
\end{equation*}
as well as the weighted norm for a positive function $w(\xi)$
\begin{equation}
    \|f\|_w:=\int_{\mathbb{R}}w(\xi)|\hat{f}(\xi)|d\xi,
\end{equation}
where $\hat{f}$ is the standard Fourier transform of $f$
\begin{equation*}
    \hat{f}(\xi)=\mathscr{F}(f)(\xi)=\int_{\mathbb{R}}f(x)e^{- ix\xi}dx.
\end{equation*}
We define the convolution of two functions as usual as
\begin{equation*}
    (f*g)(x)=\int_{\mathbb{R}}f(y)g(x-y)dy.
\end{equation*}
For functions $f,g$ defined in $\mathbb{R}$, we take
\begin{equation}\label{eq_notation_Delta}
    \begin{aligned}
        \Delta_yf(x)=\frac{f(x)-f(x-y)}{y},\\
        [f,g]_y(x)=f(x)-g(x-y).
    \end{aligned}
\end{equation}
We use $\tilde{f}$ to denote the initial data, that is
\begin{equation}
    \tilde{f}(x)=f(0,x).
\end{equation}
Finally we use the notation $f_1\lesssim f_2$ if there exists uniform constant $C>0$ such that $f_1\leq Cf_2$. Also $f_1\approx f_2$ means that $f_1\leq Cf_2$ and $f_2\leq Cf_1$.

\subsection{Main results}
Given a priori assumption of the well-defined fluid interface that exists globally in time, we will study its long-time
behavior in this paper. In particular we have the following main. theorem
\begin{theorem}\label{theorem main}
    Suppose $n\geq 2$. Let $\{f_k\}_{1\leq k\leq n}$ be a solution to system \eqref{MPM equation} and $\{g_k\}_{1\leq k\leq n}$ is defined in \eqref{def g}. Further suppose $g_k\in C([0,\infty);H^{s'}(\mathbb{R}))$ for $s'\geq3$ and all $1\leq k\leq n$. Then there exists universal constant $\gamma$ depends on $s',\{d_k\}_{1\leq k\leq n},\{\rho_k\}_{0\leq k\leq n}$, such that if the initial data is small enough to satisfy
    \begin{align}       \sum_{j=1}^{n}\left(\|\tilde{g}_j\|_{L^2}+\|\tilde{g}_j\|_0+\|\tilde{g}_j\|_1\right)\leq\gamma,
    \end{align}
     we have the uniform in time estimate for any $0\leq s<s'-1$:
     \begin{equation}
         \sum_{j=1}^n\|g_j(t)\|_s\lesssim(1+t)^{-\frac{s}{2}-\frac{1}{4}}.
     \end{equation}
\end{theorem}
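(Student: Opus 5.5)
We analyse the symbol $|\xi|A(\xi)$, write the system in Duhamel form, and close a bootstrap on a time-weighted Wiener norm.

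\textbf{Spectral step.} We first study $A(\xi)$ from \eqref{def matrix A}. Set $D=\mathrm{diag}\big((\rho_{j-1}-\rho_j)/2\big)$, which is positive in the stable regime, and $E(\xi)=\big(e^{-|d_k-d_j||\xi|}\big)_{kj}$. Then $A=ED$.

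\emph{Positivity.} The kernel $e^{-|x|\,|\xi|}$ is positive definite in $x$, being the Fourier transform of a Poisson kernel. Hence $E(\xi)$ is symmetric positive definite for $\xi\neq 0$. It follows that $A$ is similar to $D^{1/2}ED^{1/2}$, so it has real positive eigenvalues $\lambda_1(\xi)\le\dots\le\lambda_n(\xi)$ and a uniformly conditioned diagonalization on compact frequency sets away from $0$.

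\emph{Low frequencies.} As $\xi\to0$ we have $E(0)=\mathbf{1}\mathbf{1}^T$, which has rank one. Expanding $E(\xi)=\mathbf{1}\mathbf{1}^T-|\xi|\,(|d_k-d_j|)_{kj}+O(|\xi|^2)$ shows that one eigenvalue tends to $\sum_j(\rho_{j-1}-\rho_j)/2>0$. The remaining $n-1$ eigenvalues are $\approx c_j|\xi|$ with $c_j>0$, because the distance matrix is negative definite on $\mathbf{1}^\perp$.

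\emph{High frequencies.} As $|\xi|\to\infty$, $A\to D$ with exponentially small error.

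Consequently the smallest eigenvalue of the symbol $|\xi|A(\xi)$ behaves like $\min(|\xi|^2,|\xi|)$. Because $n\ge 2$, a degenerate eigenvalue of order $|\xi|^2$ is genuinely present at low frequency. This is the mechanism for the parabolic rate.

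\textbf{Linear estimate.} From the spectral step we get $|e^{-t|\xi|A(\xi)}|\lesssim e^{-c\,t\min(\xi^2,|\xi|)}$, with the near-$0$ eigenvector degeneracy controlled by a uniform similarity bound for $D^{1/2}ED^{1/2}$. For the linear part we split $|\hat{\tilde g}|\le|\hat{\tilde g}|^{1/2}|\hat{\tilde g}|^{1/2}$ and apply Cauchy--Schwarz at low frequency using $\|\tilde g\|_{L^2}$. This gives
\[
\int_{|\xi|\le1}|\xi|^se^{-ct\xi^2}|\hat{\tilde g}|\,d\xi\lesssim \|\tilde g\|_{L^2}\Big(\int|\xi|^{2s}e^{-2ct\xi^2}d\xi\Big)^{1/2}\lesssim(1+t)^{-\frac s2-\frac14}.
\]
At high frequency the factor $e^{-ct|\xi|}$ combined with $\|\tilde g\|_0$ and $\|\tilde g\|_1$, and interpolation against the $H^{s'}$ bound, gives at least the same rate. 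The $L^2$ piece of the data is exactly what produces the extra $1/4$.

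\textbf{Nonlinear step.} We rewrite \eqref{MPM equation} as $\partial_t\hat{\vec g}=-|\xi|A\hat{\vec g}+\hat{\vec N}$. As in \cite{Constantin2010OnTG,Patel03062017}, $\vec N$ is obtained by Taylor expanding the kernel. For the self-interaction term $k=j$ we expand $1/(1+(\Delta_y g_k)^2)$. For the cross terms we write
\[
\frac{y}{y^2+(d_j-d_k+[g_j,g_k]_y)^2}
\]
and expand around $[g_j,g_k]_y=0$. The resulting series can be summed in Fourier space as convolutions, with multipliers bounded by $e^{-|d_j-d_k||\xi|}$-type factors. This yields
\[
\|N\|_w\lesssim\sum_{m\ge1}C^m\|g\|_1^{2m}\|g\|_{w+1}
\]
for $w=|\xi|^s$. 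The sum converges once $\sum\|g\|_1$ is small, which the energy-type bound of \cite{Constantin2010OnTG}, adapted to the coupled system using the positivity above, keeps for all time.

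\textbf{Bootstrap.} We run a bootstrap on
\[
M(t)=\sup_{\tau\le t}\sum_j(1+\tau)^{\frac s2+\frac14}\|g_j(\tau)\|_s,
\]
applied to Duhamel's formula.

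\emph{Main obstacle.} The hard part is the Duhamel integral at low frequency. The nonlinearity carries one derivative, since it is quadratic in $g$ and linear in $\partial_x g$ up to the kernel. We would therefore use the fact that $\hat N(\xi)$ has a factor $|\xi|$ from the divergence structure: $N$ is an $x$-derivative of a flux, since $\partial_x\arctan$-type terms appear. This lets us gain $e^{-c(t-\tau)\xi^2}|\xi|$, which after integration in $\xi$ contributes $(t-\tau)^{-(s+1)/2}$ times an $L^1_\xi$ bound of the flux. That bound is $\lesssim\|g\|_0\|g\|_1$ and decays like $(1+\tau)^{-1/4}(1+\tau)^{-3/4}$.

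Splitting $\int_0^t=\int_0^{t/2}+\int_{t/2}^t$ and using the quadratic smallness then closes $M(t)\lesssim\gamma+M(t)^2$. Taking $\gamma$ small finishes the argument. The restriction $s<s'-1$ is needed to control the high-frequency tail and the derivative loss by interpolation with the $H^{s'}$ norm.
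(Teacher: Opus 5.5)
Your spectral analysis and the linear decay estimate are sound, and the divergence-form observation is correct: each kernel equals $\partial_x\arctan\frac{f_j(x-y)-f_k(x)}{y}$. That observation does close the low-frequency Duhamel integral for $s\in\{0,1\}$.

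\textbf{The gap: high frequencies.} The nonlinearity is quasilinear. By Propositions~\ref{prop estimate N1} and \ref{prop nonlinear kj}, $\|N\|_s$ is controlled only through $\|g\|_{s+1}$, which is one derivative above what your $M(t)$ controls. Writing $|\xi|^s|\hat N|=|\xi|^{s+1}|\hat\Phi|$, the semigroup supplies only $e^{-c(t-\tau)|\xi|}$ for $|\xi|\ge1$. But $\sup_{|\xi|\ge1}|\xi|e^{-c(t-\tau)|\xi|}\approx(t-\tau)^{-1}$, which is not integrable at $\tau=t$. So a pointwise-in-time bootstrap on $\sup_\tau(1+\tau)^{s/2+1/4}\|g(\tau)\|_s$ cannot absorb the lost derivative.

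Your remedy, interpolation with $H^{s'}$, needs a uniform-in-time bound on $\|g(\tau)\|_{H^{s'}}$. The hypothesis $g_k\in C([0,\infty);H^{s'})$ gives no such bound, since the norm may grow in time, and proving one is itself a quasilinear energy estimate you have not supplied. Even granting it, interpolating $\|g\|_{s+1}$ between $\|g\|_s$ and $\|g\|_{H^{s'}}$ gives up a fraction of the decay exponent, so the rate $(1+t)^{-s/2-1/4}$ would not come out.

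For the same reason, the claim that $\sum\|g\|_1$ stays small ``by the energy bound of \cite{Constantin2010OnTG} adapted to the coupled system'' is unsupported. That monotonicity argument does not transfer, because the cross terms are quadratic and the dissipation degenerates like $|\xi|^2$ at low frequency.

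The paper never uses Duhamel. It works with the differential inequality $\frac{d}{dt}\|h_i\|_s\le-\|h_i\|_{\tilde\lambda(\xi)|\xi|^{s+1}}+C\sum_{k,j}\|N_{k,j}\|_s$ for the diagonalized unknowns and proceeds as follows:
\begin{itemize}
\item it absorbs the top-order nonlinear pieces into this dissipation using smallness;
\item it treats low frequencies by splitting at $|\xi|\sim(1+\delta t)^{-1/2}$ together with the $L^2$ maximum principle of Theorem~\ref{energy conservation};
\item it closes with a time-weighted bootstrap for $s=0,1$, followed by Gronwall for general $s$.
\end{itemize}

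\textbf{Secondary problems.} First, your bound $\|N\|_{|\xi|^s}\lesssim\sum_m C^m\|g\|_1^{2m}\|g\|_{s+1}$ holds only for the self-interaction terms $N_{k,k}$. The cross terms $N_{k,j}$ are expanded in powers of $[g_k,g_j]_y=g_k(x)-g_j(x-y)$, not of a slope. They therefore start at quadratic order, and the small parameter is $\|g\|_0$. The usable bound is the $\tilde\lambda$-weighted one of Proposition~\ref{prop nonlinear kj}, which comes from the oddness of $b_{l,k,j}$. Your later flux bound $\|g\|_0\|g\|_1$ is consistent with this, not with your earlier formula.

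Second, at low frequency the flux bound $\|\Phi(\tau)\|_0\lesssim\|g\|_0\|g\|_1\sim(1+\tau)^{-1}$ gives, from $\tau\in[t/2,t]$, a contribution of order $t^{-1}$ whenever $s>1$. That is weaker than $t^{-s/2-1/4}$ once $s>3/2$. To reach the full range of $s$ you would need weighted flux bounds at higher levels, which means bootstrapping over a range of $s$ and runs straight back into the high-frequency derivative loss above.
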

\begin{remark}
    In the classical one-interface case, global well-posedness and asymptotic behavior have been extensively studied in \cite{Patel03062017}. In particular, the long-time decay rate of a single interface  is given by
    \begin{equation}
        \|f\|_s\sim (1+t)^{-s+\nu}.
    \end{equation}
    for a constant $\nu$ depends on the regularity of initial data. It is notable that this is different from the case \(n\geq 2\), where the decay rate becomes $(1+t)^{-s/2-1/4}$. This distinction is from the spectral structure of the linearized matrix $A(\xi)$. In the one-interface case, \(A(\xi)\) has only one eigenvalue, which behaves like \(1\) as \(\xi\to 0\). By contrast, when \(n\geq 2\), there is still one eigenvalue with behavior \(1\), but the remaining \(n-1\) eigenvalues behave like \(|\xi|\). It is precisely the slower decay in low frequencies that leads to less decay in the multi-phase setting.
\end{remark}
The rest of the paper is organized as follows. In Section \ref{Preliminaries} we rigorously derive the system for multi-phase Muskat problem and prove the $L^2$ maximum principle. In Section \ref{Sec Linear} we analyze the linear operator $A(\xi)$ and derive the asymptotic behavior of its eigenvalues. Finally in Section \ref{Sec nonlinear} we estimate the nonlinear term using the properties of Wiener algebra following \cite{CCG13}. This allows us to complete the proof of Theorem \ref{theorem main}.

\section{Preliminaries}\label{Preliminaries}
In this section, we briefly derive the equation for the multi-phase system \eqref{MPM equation}. By the Biot--Savart law \eqref{BS Law}, the velocity field can be expressed in terms of the density as follows:
\begin{equation}\label{expression of velocity}
    u(\mathbf{x},t)=(-K_1*\partial_{x_1}\rho,K_2*\partial_{x_1}\rho)=(-K_2*\partial_{x_2}\rho,K_2*\partial_{x_1}\rho),
\end{equation}
where we use the notation $\mathbf{x}=(x_1,x_2)$. Here the kernel $K$ is of Calderon-Zygmund type:
\begin{equation}
    K(\mathbf{x})=-\frac{1}{2\pi}\left(\frac{x_2}{|x|^2},\frac{x_1}{|x|^2}\right)=\left(K_1(\mathbf{x}),K_2(\mathbf{x})\right),
\end{equation}
(see \cite{Stein1993Harmonic}). Since $\rho$ satisfies the settings in \eqref{def rho}
 we have
 \begin{equation}
     \nabla\rho=\sum_{j=1}^n(\rho_{j-1}-\rho_{j})(\partial_{x_1}f_j(x_1,t),-1)\delta(x_2-f_j(x_1,t))
 \end{equation}
 where $\delta$ is the 1-D Dirac distribution. Using again the expression of velocity \eqref{expression of velocity} we obtain
 \begin{equation}\label{singular expression velocity}
     u(x,x_2,t)=-\frac{1}{2\pi}\sum_{j=1}^n(\rho_{j-1}-\rho_j)\PV\int_{\mathbb{R}}\frac{\left(y,y\partial_{x}f_j(x-y,t)\right)}{y^2+\left(x_2-f_j(x-y,t)\right)^2}dy.
 \end{equation}
 The equation above is valid when $x_2\neq f_k(x,t)$ and the principal value is taken at infinity (see again \cite{Stein1993Harmonic}). As $x_2$ approaches $f_k(x,t)$ in the normal direction, discontinuity arises since the vorticity is concentrated on the interfaces. Thus for $\epsilon>0$ we define
 \begin{equation}
         u^1\left(x,f_k(x,t),t\right)=\lim_{\epsilon\rightarrow0}u\left(x-\epsilon\partial_xf_k(x,t),f_k(x,t)+\epsilon,t\right),
     \end{equation}
     and
     \begin{equation}
         u^2\left(x,f_k(x,t),t\right)=\lim_{\epsilon\rightarrow0}u\left(x+\epsilon\partial_xf_k(x,t),f_k(x,t)-\epsilon,t\right).
     \end{equation}
     Then it follows
     \begin{align}
         u^1\left(x,f_k(x,t),t\right)=&-\frac{1}{2\pi}\sum_{j=1}^n(\rho_{j-1}-\rho_j)\PV\int_{\mathbb{R}}\frac{\left(y,y\partial_{x}f_j(x-y,t)\right)}{y^2+\left(f_k(x,t)-f_j(x-y,t)\right)^2}dy\\
         &+\frac{1}{2}(\rho_{k-1}-\rho_k)\frac{\partial_xf_k(x,t)\left(1,\partial_xf_k(x,t)\right)}{\sqrt{1+\partial_xf_k(x,t)^2}},
     \end{align}
     and
     \begin{align}
         u^2(x,f_k(x,t),t)=&-\frac{1}{2\pi}\sum_{j=1}^n(\rho_{j-1}-\rho_j)\PV\int_{\mathbb{R}}\frac{(y,y\partial_{x}f_j(x-y,t))}{y^2+(f_k(x,t)-f_j(x-y,t))^2}dy\\
         &-\frac{1}{2}(\rho_{k-1}-\rho_k)\frac{\partial_xf_k(x,t)(1,\partial_xf_k(x,t))}{\sqrt{1+\partial_xf_k(x,t)^2}}.
     \end{align}We further argue that the velocity in tangential directions only moves particles along the line $f_k(x,t)$ and does not make contribution to the dynamics of the interface. Thus it follows
\begin{equation}
    u\left(x,f_k(x,t),t\right)=-\frac{1}{2\pi}\sum_{j=1}^n(\rho_{j-1}-\rho_j)\PV\int_{\mathbb{R}}\frac{\left(y,y\partial_{x}f_j(x-y,t)\right)}{y^2+\left(f_k(x,t)-f_j(x-y,t)\right)^2}dy,
\end{equation}
due to the fact that
\begin{equation}
    \frac{1}{2}(\rho_{k-1}-\rho_k)\frac{\partial_xf_k(x,t)(1,\partial_xf_k(x,t))}{\sqrt{1+\partial_xf_k(x,t)^2}}
\end{equation}
is in tangential directions of the interface $f_k$. Finally, we use the fact that the interface \(f_k\) evolves according to
\begin{equation}\label{dynamic for string}   \left(0,\partial_tf_k(x,t)\right)\cdot\frac{\left(-\partial_xf_k(x,t),1\right)}{\sqrt{1+\partial_xf_k(x,t)^2}}=u\left(x,f_k(x,t),t\right)\cdot\frac{\left(-\partial_xf_k(x,t),1\right)}{\sqrt{1+\partial_xf_k(x,t)^2}}.
\end{equation}
We obtain
\begin{equation}
    \partial_tf_k=-\frac{1}{2\pi}\sum_{j=1}^n\PV(\rho_{j-1}-\rho_j) \int_{\mathbb{R}}\frac{y\left(\partial_xf_j(x-y,t)-\partial_xf_k(x,t)\right)}{y^2+(f_j(x-y,t)-f_k(x,t))^2}dy.
\end{equation}
Therefore we have derived \eqref{MPM equation}.

We then prove the $L^2$ maximum principle for system \eqref{MPM equation}. The result is fundamentally physical since it can be expressed in terms of the conservation law for gravitational potential energy.
 \begin{theorem}\label{energy conservation}
     Let $\{f_k\}_{1\leq k\leq n}$ be solution to \eqref{MPM equation} and $\{g_k\}_{1\leq k\leq n}$ defined in \eqref{def g} satisfies $g_k\in H^3(\mathbb{R}),1\leq k\leq n$. Then the following inequality holds:     \begin{equation}
         \frac{d}{dt}\left(\sum_{j=1}^n(\rho_{j-1}-\rho_j)\|g_j(\cdot,t)\|_{L^2}^2\right)\leq0.
     \end{equation}
 \end{theorem}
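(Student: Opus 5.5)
The plan is to read the weighted $L^2$ quantity as the renormalized gravitational potential energy $\int\rho\, y\,d\mathbf{x}$ and to show that its time derivative equals $-\|u\|_{L^2(\mathbb{R}^2)}^2$, using Darcy's law. I would work at the level of interfaces, so that the non-integrable quantity $\int\rho y$ never appears directly.

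\textbf{Step 1 (mean conservation).} Show that $\int_{\mathbb{R}}\partial_t f_k\,dx=0$ for each $k$. By \eqref{dynamic for string}, $\partial_t f_k=u(x,f_k,t)\cdot(-\partial_xf_k,1)$ is the normal flux of $u$ through $\Gamma_k=\{y=f_k\}$, taken per unit $x$-length. Incompressibility, applied on the region below $\Gamma_k$ truncated to $|x|<R$, equates this total flux with the flux through the lateral sides and the bottom, and those vanish as $R\to\infty$ because $u$ decays. Alternatively one can check directly from \eqref{MPM equation} that the integrand is an exact $x$-derivative: it is $\partial_x$ of $\arctan\big((f_j(x-y)-f_k(x))/y\big)$ up to a $y$-integration.

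\textbf{Step 2 (reduce to a flux identity).} Compute
\[
\frac12\frac{d}{dt}\int g_k^2\,dx=\int (f_k-d_k)\,\partial_tf_k\,dx=\int f_k\,u\cdot(-\partial_xf_k,1)\,dx=\int_{\Gamma_k} y\,u\cdot n\,ds,
\]
where Step 1 removes the $d_k$ term and $n$ is the upward normal. Multiply by $(\rho_{k-1}-\rho_k)$ and sum over $k$, regrouping by regions $\Omega_j$. Each region contributes $\rho_j$ times the net outward flux of $y u$ through its boundary $\Gamma_{j+1}\cup\Gamma_j$. The divergence theorem, together with $\operatorname{div}(yu)=u_2$ (since $\operatorname{div}u=0$), gives
\[
\frac12\frac{d}{dt}\sum_k(\rho_{k-1}-\rho_k)\|g_k\|_{L^2}^2=-\int_{\mathbb{R}^2}\rho\,u_2\,d\mathbf{x}.
\]
The signs must be tracked carefully: $\Gamma_{j+1}$ is the top of $\Omega_j$ and $\Gamma_j$ is its bottom.

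\textbf{Step 3 (Darcy).} Take the $L^2$ inner product of $u+\nabla p=-(0,\rho)$ with $u$. The pressure term $\int u\cdot\nabla p$ vanishes by $\operatorname{div}u=0$, which yields $-\int\rho u_2=\|u\|_{L^2}^2\geq 0$. Hence the derivative of the weighted sum is $-2\|u\|_{L^2}^2\le 0$.

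\textbf{Main obstacle.} The main difficulty is justifying these manipulations on the unbounded domain. The quantities $\rho$, $y\rho$ and $p$ are not integrable, and $y$ is unbounded on the lateral boundaries. I would proceed as follows.
\begin{enumerate}[label=(\roman*)]
\item Replace $\rho$ by $\rho-\bar\rho$, where $\bar\rho$ is the flat layered profile with interfaces $y=d_k$. Since $\bar\rho$ depends only on $y$, the term $(0,\bar\rho)$ is a gradient and can be absorbed into the pressure. This makes $\rho-\bar\rho$ compactly supported in $y$ within strips and lying in $L^1\cap L^\infty$ when $g_k\in H^3$.
\item Use the Biot--Savart law \eqref{BS Law} and Plancherel to see that $u\in L^2(\mathbb{R}^2)$, with $\hat u$ controlled by the $\hat g_k$.
\item Perform the divergence-theorem steps on the truncated box $[-R,R]\times[-R,R]$ and show that the boundary contributions vanish as $R\to\infty$, using the decay of $u$ and $p$ away from the interfaces. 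The regularity $g_k\in H^3$ ensures that the boundary traces $u^1,u^2$ are well defined and that the tangential jump terms computed in Section \ref{Preliminaries} drop out, since they are orthogonal to $n$.
\end{enumerate}
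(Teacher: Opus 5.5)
Your strategy is essentially the paper's: use $\operatorname{div}(yu)=u_2$ and the divergence theorem to convert the derivative of the weighted sum into $\int\rho u_2$, then show $\int\rho u_2=-\|u\|_{L^2}^2$. There are two minor differences.
\begin{itemize}
\item The paper works on the regions $\Omega_k^\pm$ between $y=d_k$ and $y=f_k$ with weight $x_2-d_k$. It then adds $\sum_j\rho_j\int_{\Omega_j'}u_2$ over the flat strips, which it tacitly treats as zero. That vanishing is the same zero-net-vertical-flux fact as your Step 1, and your renormalization $\rho-\bar\rho$ in (i) is exactly this decomposition.
\item The paper closes with $\int\rho u_2=-\|\partial_{x_1}\rho\|_{\dot H^{-1}}^2$ via Biot--Savart rather than your Darcy identity. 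Since $\|u\|_{L^2}=\|\partial_{x_1}\rho\|_{\dot H^{-1}}$, these are the same.
\end{itemize}

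However, the key display in Step 2 has the wrong sign. Write $\Phi_k=\int_{\Gamma_k}y\,u\cdot n\,ds$ with $n$ upward. Your regrouping then reads
\[
\sum_{k=1}^n(\rho_{k-1}-\rho_k)\Phi_k=\sum_{j=0}^n\rho_j\bigl(\Phi_{j+1}-\Phi_j\bigr),\qquad \Phi_0=\Phi_{n+1}=0,
\]
where $\Phi_0,\Phi_{n+1}$ stand for the vanishing flux at $y=\mp\infty$. Here $\Phi_{j+1}-\Phi_j$ is the outward flux of $yu$ through $\partial\Omega_j$: up through the top $\Gamma_{j+1}$, down through the bottom $\Gamma_j$. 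That flux equals $\int_{\Omega_j}u_2$. Hence
\[
\frac12\frac{d}{dt}\sum_k(\rho_{k-1}-\rho_k)\|g_k\|_{L^2}^2=+\int\rho\,u_2,
\]
which is also what your verbal description says. As literally written, your Step 2 combined with Step 3 ($-\int\rho u_2=\|u\|_{L^2}^2$) gives a derivative of $+2\|u\|_{L^2}^2$, the opposite inequality. Your stated conclusion $-2\|u\|_{L^2}^2$ follows only after correcting this sign. Since the theorem is a sign statement, this is the step that must be right.

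A quick check confirms the correct sign. Up to the terms $d_k\int g_k$, which are conserved by Step 1, the weighted sum is twice the excess potential energy $\int(\rho-\bar\rho)y$. Its time derivative is $\int\rho u_2$.

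A smaller inaccuracy appears in (i). Under $g_k\in H^3$ alone, $\rho-\bar\rho$ need not lie in $L^1$. When the perturbation regions of different interfaces are disjoint,
\[
\|\rho-\bar\rho\|_{L^1}=\sum_k|\rho_{k-1}-\rho_k|\,\|g_k\|_{L^1(\mathbb{R})},\qquad \|\rho-\bar\rho\|_{L^2}^2=\sum_k|\rho_{k-1}-\rho_k|^2\,\|g_k\|_{L^1(\mathbb{R})}.
\]
So you cannot use $\nabla\tilde p\in L^2$ to kill the pressure term. You would need to either assume $g_k\in L^1$, or justify $\int u\cdot\nabla\tilde p=0$ by a separate limiting argument, or compute $\|u\|_{L^2}^2$ in Fourier as the paper does. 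The paper's proof is formal on these points as well, so this refines rather than departs from its argument.
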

 \begin{proof}
     Let
     \begin{equation}
         \Gamma_k(t):=\{(x_1,x_2)\in\mathbb{R}^2|x_2=f_k(x_1,t)\}
     \end{equation}
     to be the graph of $k$-th interface. And we use the following notations for different domains
     \begin{gather}
         \Om_{k}^{\pm}(t)=\{\textbf{x}\in\mathbb{R}^2|\pm d_k<\pm x_2<\pm f_k(x_1,t)\},\\
         \Om_0(t)=\{\textbf{x}\in\mathbb{R}^2| x_2<f_1(x_1,t)\},\quad \Om_{n+1}(t)=\{\textbf{x}\in\mathbb{R}^2| x_2>f_n(x_1,t)\},\\
         \Om_k'=\{\textbf{x}\in\mathbb{R}^2| d_k<x_2<d_{k+1}\},\quad 0\leq k\leq n, \quad\text{with}\quad d_0=-\infty,\quad d_{n+1}=+\infty.
     \end{gather}
     Thus we have
     \begin{align}
         \int_{\mathbb{R}}g_k(x,t)\partial_tg_k(x,t)dx&=\int_{\Gamma_f}\frac{(x_2-d_k)\partial_tf_k(x_1,t)}{\sqrt{1+\partial_{x}f_k(x_1,t)^2
        }}d\sigma(\textbf{x})\\
        &=\int_{\Gamma_f}(x_2-d_k)(0,\partial_tf_k(x_1,t))\cdot\frac{(-\partial_xf_k(x_1,t),1)}{\sqrt{1+\partial_xf_k(x_1,t)^2
        }}d\sigma(\textbf{x})
     \end{align}
We now recall \eqref{dynamic for string}. The second vector on both sides in \eqref{dynamic for string} is the unit outer normal $n(\textbf{x})$ to $\Om_k^{+}$ when $\textbf{x}$ lies on $\partial\Om_k^{+}$, and it equals $-n(\textbf{x})$ to $\Om_k^{-}$ when $\textbf{x}$ lies on $\partial\Om_k^{-}$. The above observation yields
\begin{align}
    \frac{1}{2}\frac{d}{dt}\|g_k(\cdot,t)\|_{L^2}^2=&=\int_{\partial\Om_k^+}(x_2-d_k)u(\textbf{x})\cdot n(\textbf{x})d\textbf{x}-\int_{\partial\Om_k^-}(x_2-d_k)u(\textbf{x})\cdot n(\textbf{x})d\textbf{x} \\
        &=\int_{\Om_k^+}u_2(\textbf{x})d\textbf{x}-\int_{\Om_k^-}u_2(\textbf{x})d\textbf{x}
\end{align}
where we use the incompressible condition $\text{div}(u)=0$. This leads to the following
\begin{align}
    \frac{d}{dt}&\left(\sum_{j=1}^n(\rho_{j-1}-\rho_j)\|g_j(\cdot,t)\|_{L^2}^2\right)=\sum_{j=1}^n(\rho_{j-1}-\rho_j)\left(\int_{\Om_j^+}u_2(\textbf{x})d\textbf{x}-\int_{\Om_j^-}u_2(\textbf{x})d\textbf{x}\right)\\
    &=\sum_{j=1}^n(\rho_{j-1}-\rho_j)\left(\int_{\Om_j^+}u_2(\textbf{x})d\textbf{x}-\int_{\Om_j^-}u_2(\textbf{x})d\textbf{x}\right)+\sum_{j=0}^n\rho_j\int_{\Om_j'}u_2(\textbf{x})d\textbf{x}\\
    &=\int_{\mathbb{R}^2}\rho(\textbf{x})u_2(\textbf{x})d\textbf{x}
        =\int_{\mathbb{R}^2}\rho(\textbf{x})\partial_{x_1}(-\Delta)^{-1}\rho_{x_1}(\textbf{x})d\textbf{x}=-\|\rho_{x_1}\|_{\dot{H}^{-1}(\mathbb{R}^2)}^2 \leq0
\end{align}
Hence we complete the proof of Theorem \ref{energy conservation}.
 \end{proof}

\section{Linear Estimate}\label{Sec Linear}
We linearize system \eqref{MPM equation} assuming that $g_k$ is small enough
\begin{equation}
   \partial_tg_k(x,t)=\sum_{j=1}^n\left(T_{k,j}g_j(x,t)+N_{k,j}(x,t)\right).
\end{equation}
Here $T_{k,j}$ denotes the linear operator and can be written explicitly as below
\begin{gather}\label{def linear operator}
    T_{k,j}v(x,t)=-\frac{\rho_{j-1}-\rho_j}{2\pi}\PV\int\frac{y\partial_xv(x-y,t)}{y^2+(d_k-d_j)^2}dy,\quad k\neq j,\\
    T_{k,k}v(x,t)=-\frac{\rho_{k-1}-\rho_k}{2\pi}\PV\int\frac{\partial_xv(x-y,t)-\partial_xv(x,t)}{y}dy,
\end{gather}
The remaining terms are nonlinear, and we treat them as perturbative terms by recalling \eqref{eq_notation_Delta}
\begin{gather}\label{def nonlinear k neq j}
    N_{k,j}(x,t)=\frac{\rho_{j-1}-\rho_j}{2\pi}\int\frac{y\partial_x[g_k,g_j]_y(x,t)\left((d_k-d_j)^2-(d_k-d_j+[g_k,g_j]_y(x,t))^2\right)}{(y^2+(d_k-d_j)^2)(y^2+(d_k-d_j+[g_k,g_j]_y(x,t))^2)}dy
\end{gather}
for $k\neq j$ and
\begin{equation}\label{def nonlinear kk}
    N_{k,k}(x,t)=\frac{\rho_{k-1}-\rho_k}{2\pi}\int\partial_x\Delta_yg_k(x,t)\frac{\left(\Delta_yg_k(x,t)\right)^2}{1+\left(\Delta_yg_k(x,t)\right)^2}dy.
\end{equation}
In the argument below, we omit the time variable $t$ whenever no confusion is incurred, for simplicity. We now study the linearized system in the Fourier space, since the linear operator can be expressed as a Fourier multiplier. In particular we have
\begin{equation}\label{fourier for linear}
    \widehat{T_{k,j}v}(\xi)=-\frac{\rho_{j-1}-\rho_j}{2}|\xi|e^{-|d_k-d_j||\xi|}\hat{v}(\xi),\quad 1\leq k,j\leq n.
\end{equation}
Therefore we are ready to write the system in Fourier space as
\begin{equation}\label{eq vector g}
    \partial_t\widehat{\vec{g}}(\xi,t)=-|\xi|A(\xi)\widehat{\vec{g}}(\xi,t)+\widehat{\vec{N}}(\xi,t)
\end{equation}
with the definition of linear matrix $A(\xi)$ in \eqref{def matrix A}. The nonlinear term $\vec{N}(x,t)=(N_1,\dots,N_n)^T(x,t)$ is defined as
\begin{equation}
    N_k(x,t)=\sum_{j=1}^nN_{k,j}(x,t).
\end{equation}
We now analyze the behavior of the matrix \(A(\xi)\). To begin with, we present the following lemma, which provides a criterion for its positive definiteness.
\begin{lemma}[Bochner Theorem]\label{Bochner lemma}
    Let $\phi:\mathbb{R}\rightarrow\mathbb{R}$ to be an even function with the condition that $\hat{\phi}(\xi)\geq0$. Then for any given  $x_1,x_2,\dots,x_n\in\mathbb{R}$, the following matrix
    \begin{equation}
        M=(\phi(x_i-x_j))_{n\times n}
    \end{equation}
    is symmetric and positive semi-definite.
\end{lemma}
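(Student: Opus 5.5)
Symmetry is immediate from evenness: $M_{ij}=\phi(x_i-x_j)=\phi(-(x_j-x_i))=\phi(x_j-x_i)=M_{ji}$. For positive semi-definiteness I will express the quadratic form through the Fourier transform. I assume enough regularity that Fourier inversion holds pointwise, say $\phi$ continuous with $\hat\phi\in L^1(\mathbb{R})$. This holds in the intended application $\phi(x)=e^{-c|x|}$, whose transform $\hat\phi(\xi)=\frac{2c}{c^2+\xi^2}$ is positive and integrable. Then
\begin{equation}
    \phi(x)=\frac{1}{2\pi}\int_{\mathbb{R}}\hat{\phi}(\xi)e^{ix\xi}\,d\xi .
\end{equation}
Since $\phi$ is even and real, $\hat\phi$ is real, and the hypothesis $\hat\phi\ge 0$ makes sense.

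Now fix $c=(c_1,\dots,c_n)^T\in\mathbb{C}^n$ and compute $c^*Mc=\sum_{i,j}\bar c_i c_j\phi(x_i-x_j)$. Substituting the inversion formula and exchanging the finite sum with the integral gives
\begin{equation}
    c^*Mc=\frac{1}{2\pi}\int_{\mathbb{R}}\hat\phi(\xi)\sum_{i,j=1}^n\bar c_i c_j e^{ix_i\xi}e^{-ix_j\xi}\,d\xi=\frac{1}{2\pi}\int_{\mathbb{R}}\hat\phi(\xi)\Big|\sum_{j=1}^n c_je^{-ix_j\xi}\Big|^2d\xi\ \ge 0 .
\end{equation}
Because $M$ is real symmetric, this in particular covers real vectors $c$, so $M$ is positive semi-definite. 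If one moreover has $\hat\phi>0$ almost everywhere and the $x_j$ are distinct, the exponentials $e^{-ix_j\xi}$ are linearly independent functions. The quadratic form then vanishes only for $c=0$, which yields strict positive definiteness; this is the form in which the lemma will later be applied to $A(\xi)$.

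The only delicate point is the regularity needed to justify the inversion formula, since the lemma as stated does not impose it. If only $\hat\phi\ge0$ as a measure is known, I would mollify: replace $\phi$ by $\phi*\eta_\varepsilon$ with $\eta_\varepsilon$ a Gaussian. Its transform $\hat\phi\,\hat\eta_\varepsilon$ is still nonnegative and now integrable, so the argument above applies. Letting $\varepsilon\to0$ pointwise, using continuity of $\phi$, closes the argument, because the cone of positive semi-definite matrices is closed.
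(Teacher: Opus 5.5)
Your proof is correct and follows the same route as the paper. Both substitute the Fourier inversion formula for $\phi$ into the quadratic form and recognize it as $\frac{1}{2\pi}\int\hat\phi(\xi)\,\bigl|\sum_j c_j e^{-ix_j\xi}\bigr|^2\,d\xi\ge 0$. Your additions are sound refinements rather than a different method: the explicit symmetry check, the $\frac{1}{2\pi}$ factor the paper drops, the regularity caveat with mollification (some continuity is genuinely needed and is tacitly assumed by the paper), and the strict-definiteness remark for distinct $x_j$ with $\hat\phi>0$ (which would give positive definiteness of $A_0(\xi)$ directly, where the paper instead computes $\det A_0(\xi)$).
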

\begin{proof}
    For a given real vector $v=(v_1,v_2,\dots,v_n)^T$ we compute
    \begin{align}
        v^TMv&=\sum_{j,k=1}^nv_jv_k\phi(x_j-x_k)\\
        &=\sum_{j,k=1}^nv_jv_k\int_{\mathbb{R}}e^{i(x_j-x_k)\xi}\hat{\phi}(\xi)d\xi\\
        &=\int_{\mathbb{R}}\left|\sum_{j=1}^nv_je^{ix_j\xi}\right|^2\hat{\phi}(\xi)d\xi\geq0,
    \end{align}
    which completes the proof.
\end{proof}

The following proposition is at the heart of our linear estimates.
\begin{proposition}\label{prop linear matrix}
    The matrix $A(\xi)$ is diagonalizable for all $\xi$ with $n$ eigenvalues $\lambda_1(\xi)\leq\lambda_2(\xi)\dots\leq\lambda_n(\xi)$ satisfying that:
    \begin{enumerate}[label=(\roman*), ref=(\roman*)]
        \item\label{item:positive-eigenvalues} For all $\xi\neq0$ we have \begin{equation}
            \lambda_k(\xi)>0,\quad \forall 1\leq k\leq n
        \end{equation} 
        and
        \begin{equation}
            \lambda_1(0)=\dots=\lambda_{n-1}(0)=0,\quad \lambda_n(0)=\frac{\rho_0-\rho_n}{2}.
        \end{equation}

        \item\label{item:asymptotic-eigenvalues}Asymptotic behavior: There exists $\epsilon,\alpha>0$ and a permutation \(\tau\) of \(\{1,\dots,n\}\) such that for all $0<\xi<\epsilon$ we have
        \begin{equation}
            \lambda_1(\xi)\geq\alpha\xi
        \end{equation}
        and
        \begin{equation}
            \lim_{\xi\rightarrow+\infty}\lambda_{\tau(k)}(\xi)=\frac{\rho_{k-1}-\rho_k}{2},\quad 1\leq k\leq n.
        \end{equation}
    \end{enumerate}
\end{proposition}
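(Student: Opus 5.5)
Write $A(\xi)=E(\xi)D$, where $D=\mathrm{diag}(\delta_1,\dots,\delta_n)$ with $\delta_j=\frac{\rho_{j-1}-\rho_j}{2}>0$ (stable regime), and $E(\xi)=(e^{-|d_k-d_j||\xi|})_{k,j}$. The function $\phi(x)=e^{-|\xi||x|}$ is even with $\hat\phi(\eta)=\frac{2|\xi|}{\xi^2+\eta^2}\ge0$. Lemma \ref{Bochner lemma} therefore makes $E(\xi)$ symmetric positive semi-definite. For $\xi\neq0$ it is in fact positive definite: $\hat\phi>0$ everywhere and the $d_j$ are distinct, so $\sum v_je^{id_j\eta}\equiv0$ forces $v=0$.

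Now $A=ED$ is similar to $D^{1/2}ED^{1/2}$, since $D^{1/2}(ED)D^{-1/2}=D^{1/2}ED^{1/2}$. The latter is real symmetric, so $A(\xi)$ is diagonalizable with real eigenvalues, and these are positive for $\xi\ne0$. At $\xi=0$ we get $E(0)=\mathbf{1}\mathbf{1}^T$, which has rank one. Hence $A(0)=\mathbf{1}(\delta_1,\dots,\delta_n)$ has $n-1$ zero eigenvalues and one eigenvalue equal to $\sum\delta_j=\frac{\rho_0-\rho_n}{2}$ (telescoping). This proves (i).

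For the large-$\xi$ limit, $E(\xi)\to I$ as $\xi\to\infty$ because the $d_j$ are distinct. Then $D^{1/2}ED^{1/2}\to D$, and continuity of the ordered eigenvalues of symmetric matrices (Weyl) gives convergence of the sorted spectrum to the sorted $\{\delta_k\}$. This defines the permutation $\tau$, with any tie-breaking among equal $\delta_k$.

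The main obstacle is the lower bound $\lambda_1(\xi)\ge\alpha\xi$ near $0$. By Rayleigh quotients and the similarity, $\lambda_1\ge\min_j\delta_j\cdot\lambda_{\min}(E(\xi))$, so it suffices to show $v^TE(\xi)v\gtrsim\xi|v|^2$ for small $\xi>0$. Bochner's formula gives
\[
v^TE(\xi)v=\frac{1}{2\pi}\int_{\mathbb{R}}\Big|\sum_j v_je^{id_j\eta}\Big|^2\frac{2\xi}{\xi^2+\eta^2}\,d\eta \;\ge\; \frac{1}{2\pi}\int_{|\eta|\le1}\Big|\sum_j v_je^{id_j\eta}\Big|^2\frac{2\xi}{\xi^2+1}\,d\eta
\]
for $\xi\le1$. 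Since the exponentials $e^{id_j\eta}$ are linearly independent on $[-1,1]$, the quadratic form $\int_{-1}^1|\sum v_je^{id_j\eta}|^2d\eta$ is positive definite, say $\ge c|v|^2$. This gives $\lambda_{\min}(E(\xi))\ge\frac{c}{2\pi}\xi$ for $0<\xi\le1$, and hence the claim with $\alpha=\frac{c\min_j\delta_j}{2\pi}$ and $\epsilon=1$.

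As a sanity check, this is consistent with $E(\xi)=\mathbf{1}\mathbf{1}^T-\xi(|d_k-d_j|)+O(\xi^2)$: the distance matrix is conditionally negative definite on $\mathbf{1}^\perp$, which also gives the $\sim\xi$ behavior of the $n-1$ small eigenvalues.
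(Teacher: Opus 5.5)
Your proof is correct, and it shares the paper's skeleton: the factorization $A=E(\xi)D$, similarity to the symmetric matrix $D^{1/2}ED^{1/2}$, Bochner for semi-definiteness, the rank-one matrix at $\xi=0$, and $E(\xi)\to I$ as $\xi\to\infty$. The two substantive steps are done differently.

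For strict positivity at $\xi\neq0$, the paper row-reduces to the explicit formula $\det A_0(\xi)=\prod_{j=1}^{n-1}(1-e^{-2(d_{j+1}-d_j)|\xi|})>0$. You instead use $\hat\phi>0$ together with linear independence of the $e^{id_j\eta}$.

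For $\lambda_1(\xi)\geq\alpha\xi$, the paper argues perturbatively. It writes $A_0(\xi)=J+B\xi+O(\xi^2)$ with $B=(-|d_k-d_j|)$ and splits $v$ into its components along $\mathbf{1}$ and $\mathbf{1}^\perp$. It proves $B$ is positive definite on $\mathbf{1}^\perp$ via the identity $-\sum_{j,k}|d_j-d_k|v_jv_k=2\sum_{j=1}^{n-1}(d_{j+1}-d_j)(\sum_{k\le j}v_k)^2$, valid when $\sum_k v_k=0$. It then separates the cases where the $\mathbf{1}$-component is above or below $C\xi^{1/2}$. So what you list as a sanity check is in fact the paper's main argument. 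You instead make Bochner's representation quantitative: you truncate the spectral integral to $|\eta|\le1$, where the kernel $\frac{2\xi}{\xi^2+\eta^2}$ is at least $\xi$, and use positive definiteness of a fixed Gram matrix.

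What your route buys:
\begin{itemize}
\item It is shorter, and a single tool gives semi-definiteness, definiteness and the quantitative bound, with no error-term bookkeeping or case split.
\item It is non-asymptotic: it gives $\lambda_{\min}(E(\xi))\geq\frac{c}{\pi}\frac{|\xi|}{1+\xi^2}$ for every $\xi$, hence $\epsilon=1$.
\item It makes explicit the passage from $E$ to $A$, which the paper leaves implicit: the factor $\min_j\delta_j$, the exact value $\sum_j\delta_j$ at $\xi=0$, and Weyl applied to $D^{1/2}ED^{1/2}\to D$.
\end{itemize}

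What the paper's expansion buys is a view of the mechanism behind the slower decay rate. To first order, the $n-1$ small eigenvalues are governed by the compression of $B$ to $\mathbf{1}^\perp$, with a lower bound explicit in the gaps $d_{j+1}-d_j$. Your Gram-matrix constant $c$ certifies the order $\xi$ but does not identify the coefficient.
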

\begin{proof}
    We notice
    \begin{equation}
        A(\xi)=A_0(\xi)Q
    \end{equation}
    with
    \begin{equation}\label{def Q and A0}        A_0(\xi)=(e^{-|d_j-d_k||\xi|})_{n\times n},\quad Q=diag(\frac{\rho_{k-1}-\rho_k}{2})_{1\leq k\leq n } .  \end{equation}
    Then matrix $A$ is similar to $Q^{\frac{1}{2}}A_0Q^{\frac{1}{2}}$. Since $A_0$ is symmetric and $Q^{\frac{1}{2}}$ is a constant diagonal matrix with all components large than zero, we only need to prove \ref{item:positive-eigenvalues} and \ref{item:asymptotic-eigenvalues} for $A_0(\xi)$, with the asymptotic behavior replaced by
    \begin{equation}
        \lambda_n(0)=1,\quad\lim_{\xi\rightarrow+\infty}\lambda_{\tau(k)}(\xi)=1,\quad 1\leq k\leq n.
    \end{equation}
    We compute
    \begin{equation}
        \varphi(x)=e^{-|x|},\quad\widehat{\varphi}(\xi)= \frac{2}{1+\xi^2}.
    \end{equation}
    Then $A_0(\xi)$ is positive semi-definite  due to Lemma \ref{Bochner lemma} by noticing
    \begin{equation}
        A_0(\xi)=(\varphi(d_k\xi-d_j\xi))_{n\times n}.
    \end{equation}
    Furthermore when $\xi=0$, $A_0(\xi)$ is the all-ones matrix. Hence it has $n-1$ zero eigenvalues and one nonnegative eigenvalue equals $n$. When $\xi\neq 0$, it is straightforward to use upper triangularization to compute the determinant of $A_0(\xi)$ by replacing the $k$-th row $R_k$ with $R_k-e^{-(d_k-d_{k-1})|\xi|}R_{k-1}$ for $k=n,n-1,\dots,2$ successively to obtain
    \begin{equation}
        \text{det}(A_0(\xi))=\prod_{j=1}^{n-1}\left(1-e^{-2(d_{j+1}-d_j)|\xi|}\right)>0.
    \end{equation}
    This implies $A_0(\xi)$ is positive definite for $\xi\neq0$, therefore yields the first part of the proposition.

    When $\xi\rightarrow0^+$ we use the following expansion
    \begin{equation}
        A_0(\xi)=J+B\xi+O(\xi^2).
    \end{equation}
    Here
    \begin{equation}
        J=A_0(0)=\textbf{1}\textbf{1}^T,\quad B=(-|d_k-d_j|)_{n\times n},\quad \textbf{1}=(1,\dots,1)^T.
    \end{equation}
    We decompose the vector field as
    \begin{equation}
        \mathbb{R}^n=V\oplus V^{\perp}
    \end{equation}
    where $V$ denotes the eigenspace corresponding to the only nonzero eigenvalue of $J$
    \begin{equation}
        V=\text{span}\{\textbf{1}\}.
    \end{equation}
    Thus it is straightforward to decompose a unit vector $v=(v_1,v_2,\dots,v_n)^T\in V,\|v\|=1$ into
    \begin{equation}
        v=av'+bv'',\quad v'\in V,v''\in V^{\perp}
    \end{equation}
    with the normalization
    \begin{equation}
        a^2+b^2=1,\quad \|v'\|=1,\quad \|v''\|=1.
    \end{equation}
Direct computation shows
\begin{equation}
    v^TA_0v=a^2\left((v')^TJv'+O(\xi)\right)+\xi\left(2ab(v')^TBv''+b^2(v'')^TBv''\right)+O(\xi^2)
\end{equation}
Choosing a large  constant $C$ such that when $a\geq C|\xi|^{\frac{1}{2}}$,
\begin{equation}
    v^TA_0v\geq \frac{n}{2}C^2\xi-C\xi+O(\xi^2)\geq C\xi
\end{equation}
holds for all small $\xi$. Otherwise, if  $a\leq C|\xi|^{\frac{1}{2}}$, we observe   \begin{align}
        (v'')^TBv''&=\sum_{j,k=1}^n-|d_j-d_k|v''_jv''_k\\
        &=2\sum_{j=1}^{n-1}(d_{j+1}-d_j)\left(\sum_{k=1}^jv''_k\right)^2\geq 0
    \end{align}
    where we use $\sum_{j=1}^nv''_j=0$. Note the equality holds if and only if $v''=\textbf{0}$, then there exists $\mu>0$ which depends on $d_j,\rho_j$, such that
    \begin{equation}
        (v'')^TBv''\geq\mu. 
    \end{equation}
    This implies
    \begin{equation}
        v^TA_0v\geq a^2(n+O(\xi))+O(\xi^{\frac{3}{2}})+\frac{\mu}{2}\xi+O(\xi^2)\geq \frac{\mu}{4}\xi
    \end{equation}
    for sufficiently small $\xi$. The above argument shows by choosing $\alpha=\min\{C,\mu/4\}$ and sufficiently small  $\epsilon$ we have for all $0<\xi<\epsilon$,
    \begin{equation}
        \lambda_1(\xi)\geq\alpha\xi.
    \end{equation}

    For the asymptotic behavior of $A_0(\xi)$ at infinity we simply point out that $A_0(\xi)$ tends to identity matrix as $\xi\rightarrow\infty$. This completes the proof of proposition.
\end{proof}
Given the proof of Proposition \ref{prop linear matrix} we are ready to diagonalize matrix $A(\xi)$ by defining
\begin{gather}\label{def diagonalize}
    P^{-1}(\xi)A(\xi)P(\xi)=D(\xi),\\
    D(\xi)=diag(\lambda_k(\xi))_{1\leq k\leq n},\quad P(\xi)=Q^{-\frac{1}{2}}P_0(\xi)
\end{gather}
where $Q$ is defined in \eqref{def Q and A0} and $P_0(\xi)$ can be chosen as an orthogonal matrix. We then define
\begin{equation}\label{def h}
    \widehat{\vec{h}}(\xi,t)=(\hat{h}_1,\dots,\hat{h}_n)^T(\xi,t)=P^{-1}(\xi)\widehat{\vec{g}}(\xi,t).
\end{equation}
Given the asymptotic behavior in the above proposition we can also take an even function $\tla(0)=0$ such that
\begin{equation}\label{def tilde lambda}
    \frac{\alpha}{2}|\xi|\leq\tla(|\xi|)\quad \text{for}\quad \xi\rightarrow0,\quad \tla(\xi)\quad\text{non-decreasing and}\quad \tla(\xi)\leq\lambda_1(\xi) \quad \text{for all}\quad \xi>0.
\end{equation}
\begin{remark}\label{remark gh}
    We remark here that by the definition of $P$ in \eqref{def diagonalize} it is obvious
\begin{equation}
    \sum_{j=1}^n\|\hat{h}_j(\xi)\|\approx\sum_{j=1}^n\|\hat{g}_j(\xi)\|
\end{equation}
for any norm defined in the Fourier space. Thus in the following context we should keep in mind that these two terms are equivalent to deal with.
\end{remark}

\section{Bounds on nonlinear terms}\label{Sec nonlinear}
In this section we present bounds on the  nonlinear terms and close the proof of Theorem \ref{theorem main}.

\subsection{Bounds on the nonlinear term $N_{k,k}$}
We follow the strategy presented in \cite{CCGS13} to give the estimate of $\int|\xi|^s\left|\mathscr{F}N(\xi)\right|$, in which we use the Taylor expansion of $(\Delta_yf(x))^2/\left(1+(\Delta_yf(x))^2\right)$. Then we derive an upper bound for $\left|\int\mathscr{F}(\partial_x\Delta_yf(x)\cdot(\Delta_yf(x))^{2n})(\xi)dy\right|$ for each $n$, and sum over \(n\) to obtain the desired estimate. Repeating the arguments in \cite{Patel03062017} and \cite{CCGS13}, we obtain the following estimate.
\begin{proposition}\label{prop estimate N1}
For $\|g_k\|_{s+1},\|g_k\|_1<\infty$, the following estimates for nonlinear term $N_{k,k}$ holds:
\begin{equation}\label{ineq_prop_estimate_N1}
    \int|\xi|^s\cdot|\mathscr{F}(N_{k,k})(\xi)|d\xi\lesssim \sum\limits_{l=1}^{\infty}(2l+1)^s\left(\|g_k\|_1\right)^{2l}\|g_k\|_{s+1}.
\end{equation}
\end{proposition}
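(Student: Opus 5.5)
We expand the rational nonlinearity in a geometric series and estimate each term in Fourier space with the Wiener-algebra convolution inequality, following \cite{CCGS13,Patel03062017}. Write $f=g_k$ and $\Delta_y f(x)=\int_0^1 \partial_x f(x-y+sy)\,ds$, so that $\|\Delta_y f\|_{L^\infty}\le \|f\|_1$ uniformly in $y$. Under the smallness $\|f\|_1<1$ (the regime of Theorem \ref{theorem main}; otherwise the right-hand side of \eqref{ineq_prop_estimate_N1} diverges and there is nothing to prove), we expand
\begin{equation}
\frac{(\Delta_yf)^2}{1+(\Delta_yf)^2}=\sum_{l=1}^{\infty}(-1)^{l+1}(\Delta_yf)^{2l}.
\end{equation}
Hence $N_{k,k}=\frac{\rho_{k-1}-\rho_k}{2\pi}\sum_{l\ge1}(-1)^{l+1}\int\partial_x\Delta_yf\,(\Delta_yf)^{2l}\,dy$. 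It therefore suffices to prove, for each $l$,
\begin{equation}
\int|\xi|^s\left|\mathscr{F}\Big(\int\partial_x\Delta_yf\,(\Delta_yf)^{2l}dy\Big)(\xi)\right|d\xi\lesssim (2l+1)^s\|f\|_1^{2l}\|f\|_{s+1},
\end{equation}
with an implicit constant independent of $l$.

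\textbf{Fourier side.} First, $\mathscr{F}(\Delta_y f)(\xi)=\hat f(\xi)\,m(\xi,y)$ with $m(\xi,y)=\frac{1-e^{-iy\xi}}{y}$. The $y$-integral of a product of $2l+1$ such factors becomes
\begin{equation}
\int_{\mathbb{R}^{2l}} i\xi_0\hat f(\xi_0)\prod_{j=1}^{2l}\hat f(\xi_j)\;\Big(\int_{\mathbb{R}}\prod_{j=0}^{2l}\frac{1-e^{-iy\xi_j}}{y}\,dy\Big)\,d\xi_1\cdots d\xi_{2l},
\end{equation}
where $\xi_0=\xi-\xi_1-\cdots-\xi_{2l}$. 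The key identity from \cite{CCGS13} is to write $m(\xi_j,y)=i\xi_j\int_0^1e^{-iy\xi_j s_j}ds_j$ for $j\ge1$ and keep the $j=0$ factor as is. The $y$-integral then reduces to a principal-value integral of $\frac{1-e^{-iy\xi_0}}{y}e^{-iy\sum\xi_js_j}$, which is a bounded combination of sign functions with absolute value at most $\pi$. This is the classical computation behind the $\Lambda$ operator, which the paper already uses through the kernel $\frac{1}{y}$ in \eqref{def linear operator}.

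\textbf{Estimate.} Bounding the multiplier by a constant independent of $l$, taking absolute values, and using $|\xi|^s\le(2l+1)^s\max_j|\xi_j|^s$ (since $\xi=\sum_{j=0}^{2l}\xi_j$), we distribute the weight onto one factor. After symmetrizing, this gives $(2l+1)^s$ times a sum over which factor carries the weight. Each term is bounded by Young's inequality for convolutions in $L^1$ by $\|f\|_{s+1}\|f\|_1^{2l}$, because each factor carries exactly one derivative $|\xi_j|$. Summing over $l$ then gives \eqref{ineq_prop_estimate_N1}.

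\textbf{Main obstacle.} The delicate step is obtaining an $l$-independent bound on the oscillatory $y$-integral, since a naive bound would lose $C^{2l}$ and destroy summability. I would follow exactly the \cite{CCGS13} trick of integrating out $y$ in the single factor $\frac{1-e^{-iy\xi_0}}{y}$ after parametrizing the other factors by $s_j\in[0,1]$, which yields the uniform bound $\pi$. A second point is that the counting of where the weight $|\xi|^s$ goes produces the factor $(2l+1)^s$ rather than anything larger. For $s\ge1$ one can alternatively use $|\xi|^s\le(2l+1)^{s-1}\sum_j|\xi_j|^s$, and the extra factor of $2l+1$ is absorbed by the same bound.
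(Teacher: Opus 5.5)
\textbf{Verdict: same approach as the paper, but your main line gets the $l$-dependence wrong.}

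Your proposal is the paper's proof. You expand $(\Delta_y f)^2/(1+(\Delta_y f)^2)$ as a geometric series and use $\frac{1-e^{-iy\eta}}{y}=i\eta\int_0^1 e^{-iy\eta s}\,ds$ on every factor except the $\partial_x\Delta_y f$ one. You integrate out $y$ into sign functions to get an $l$-independent multiplier bound, then distribute $|\xi|^s$ and apply Young's inequality in $L^1$. Your remark that one may assume $\|f\|_1<1$, which legitimizes the expansion, is a point the paper skips. A minor correction: the $y$-integral equals $i\pi\bigl(\mathrm{sgn}(A+\xi_0)-\mathrm{sgn}(A)\bigr)$ with $A=\sum_{j\ge1}\xi_j s_j$. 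Its modulus is therefore bounded by $2\pi$, not $\pi$; this is the paper's $|M_l|\le 2\pi$.

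\textbf{The count in $l$.} Your main line gives $(2l+1)^{s+1}$, not $(2l+1)^s$.
\begin{itemize}
\item From $|\xi|^s\le(2l+1)^s\max_j|\xi_j|^s\le(2l+1)^s\sum_{j=0}^{2l}|\xi_j|^s$ you get $2l+1$ terms, each bounded by $\|f\|_{s+1}\|f\|_1^{2l}$.
\item Symmetrizing shows these terms are equal; it does not remove them.
\item The inequality in your last sentence, $|\xi|^s\le(2l+1)^{s-1}\sum_j|\xi_j|^s$, is exactly what the paper uses, and it does give $(2l+1)^s$. But it is a convexity inequality and holds only for $s\ge 1$.
\item For $0<s<1$ it already fails when a single $\xi_j$ is nonzero. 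The correct tool there is subadditivity, $|\xi|^s\le\sum_j|\xi_j|^s$, which yields the factor $2l+1$.
\end{itemize}
So for $0<s<1$, neither your argument nor the paper's as written produces $(2l+1)^s$. What this method actually proves is the per-$l$ factor $(2l+1)^{\max(s,1)}$.

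This is harmless downstream. The series $\sum_l (2l+1)^{\max(s,1)}x^{2l}$ still converges for $|x|<1$ and is $O(x^2)$ near $0$, which is all the decay argument uses about $F_{0,s}$. You should nevertheless state the bound in that form, or restrict to $s\ge1$, rather than claim $(2l+1)^s$ from the max-then-sum count.
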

We now state the following lemma as in \cite{CCGS13}:
\begin{lemma}\label{prop_estimate_N1_lemma}
    For each $l$ and function $f$, we have:
    \begin{equation*}
    \begin{aligned}
        &\int|\xi|^s\cdot\left|\int \mathscr{F}(\partial_x\Delta_yf(x)\cdot\left(\Delta_yf(x))^{2l}\right)
        (\xi)dy\right|d\xi\leq\\
        &2\pi\int\int\dots\int|\xi|^s|\xi-\xi_1|\|\hat{f}(\xi-\xi_1)|\times\prod_{j=1}^{2l-1}|\xi_j-\xi_{j+1}||\hat{f}(\xi_j-\xi_{j+1})||\xi_{2l}||\hat{f}(\xi_{2l})|d\xi d\xi_1\dots d\xi_{2l}.
    \end{aligned}
    \end{equation*}
\end{lemma}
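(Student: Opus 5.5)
We compute the Fourier transform of the product $\partial_x\Delta_yf(x)\cdot(\Delta_yf(x))^{2l}$ directly and then integrate in $y$ before taking absolute values. The key identity is
\begin{equation}
\widehat{\Delta_y f}(\eta)=\hat f(\eta)\,\frac{1-e^{-iy\eta}}{y}=\hat f(\eta)\, i\eta\, m(y\eta),\qquad m(z):=\int_0^1 e^{-iz\tau}d\tau ,
\end{equation}
and similarly $\widehat{\partial_x\Delta_yf}(\eta)=-\eta^2\hat f(\eta)m(y\eta)$.

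\textbf{Step 1: the $(2l+1)$-fold convolution.} The Fourier transform of the product is, up to the normalisation $(2\pi)^{-2l}$, the iterated convolution. Label the frequencies $\xi-\xi_1,\ \xi_1-\xi_2,\dots,\xi_{2l-1}-\xi_{2l},\ \xi_{2l}$. This gives
\begin{equation}
\mathscr F(\cdots)(\xi)=c_l\int\!\cdots\!\int \hat f(\xi-\xi_1)\prod_{j=1}^{2l-1}\hat f(\xi_j-\xi_{j+1})\,\hat f(\xi_{2l})\,\Big[(\xi-\xi_1)^2\prod(\xi_j-\xi_{j+1})\,\xi_{2l}\Big]\,\prod m(y\cdot)\,d\xi_1\cdots d\xi_{2l}.
\end{equation}
Here the bracket carries one extra power of the first frequency, coming from the $\partial_x$.

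\textbf{Step 2: the $y$-integral.} We integrate in $y$ first, exchanging the $y$ and $\xi_j$ integrals as in \cite{CCGS13}. Writing each $m(y\eta)=\int_0^1 e^{-iy\eta s}ds$, we must evaluate
\begin{equation}
\int_{\mathbb R}\prod_{j=0}^{2l} m(y\eta_j)\,dy, \qquad \eta_0=\xi-\xi_1,\ \eta_j=\xi_j-\xi_{j+1},\ \eta_{2l}=\xi_{2l}.
\end{equation}
Up to a constant depending on the convention, this equals
\begin{equation}
2\pi\int_{[0,1]^{2l+1}}\delta\Big(\sum_j s_j\eta_j\Big)\,ds .
\end{equation}
We bound this by integrating out $s_0$ against the delta: $\int_0^1\delta(s_0\eta_0+c)\,ds_0\le 1/|\eta_0|$, and the remaining $s_j$-integrals are at most $1$. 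Hence $\left|\int\prod m\,dy\right|\le 2\pi/|\xi-\xi_1|$.

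This cancels exactly one power of $|\xi-\xi_1|$, leaving $|\xi-\xi_1|\,\prod_j|\xi_j-\xi_{j+1}|\,|\xi_{2l}|$ times the $|\hat f|$ factors.

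\textbf{Step 3: conclude.} We take absolute values, multiply by $|\xi|^s$, and integrate in $\xi$; this yields the claimed bound, with the normalising constants of the convolution absorbed into the stated $2\pi$.

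The main obstacle is justifying the interchange of the $y$-integral with the frequency integrals. The integral $\int\prod m(y\eta_j)\,dy$ converges only conditionally: each $|m(y\eta)|\lesssim\min(1,1/|y\eta|)$, and although the product of $2l+1\ge 3$ such factors is absolutely integrable, uniformity fails near $\eta_j=0$. To handle this I would first insert a cutoff $e^{-\varepsilon y^2}$, or restrict to $|y|\le R$, apply Fubini, bound the regularised kernel by the same $1/|\eta_0|$ estimate (the Gaussian only averages the delta function), and pass to the limit by dominated convergence using $\|f\|_1,\|f\|_2<\infty$.
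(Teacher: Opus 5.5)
Your proposal is correct and is essentially the paper's argument: pass to the $2l$-fold convolution on the Fourier side, do the $y$-integral first, and bound the resulting $y$-kernel by $2\pi/|\xi-\xi_1|$ so that exactly one power of each frequency survives. The only difference is how that kernel is evaluated. The paper keeps the first factor as $(1-e^{-iy(\xi-\xi_1)})/y$ and uses $\PV\int e^{iyG}\,dy/y=i\pi\,\mathrm{sgn}\,G$ to get $|M_l|\le 2\pi$, while you expand every factor, obtain $2\pi\int\delta(\sum_j s_j\eta_j)\,ds$ and integrate out $s_0$; this is the same computation with the $s_0$- and $y$-integrations swapped, and your Gaussian cutoff supplies the justification of the interchange that the paper leaves formal.
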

\begin{proof}
    We first observe
    \begin{gather}
        \mathscr{F}(\Delta_yf)=m(\xi,y),\quad  \mathscr{F}(\partial_x\Delta_yf)=m(\xi,y)=i\xi m(\xi,y),\\
        m(\xi,y)=\frac{1-e^{-i\xi y}}{y}\hat{f}(\xi)=i\xi\int_0^1e^{iy(z-1)\xi}dz\hat{f}(\xi).
    \end{gather}
    Therefore
    \begin{gather}
        \int \mathscr{F}(\partial_x\Delta_yf(x)\cdot(\Delta_yf(x))^{2l})
        (\xi)dy=\int((i\xi m)*m*\dots*m)(\xi,y)dy\\
        =i\int d\xi_1\dots\int d_{\xi_{2l}}
(\xi-\xi_1)\hat{f}(\xi-\xi_1)\times\left(\prod_{j=1}^{2l-1}(\xi_j-\xi_{j+1})\hat{f}(\xi_j-\xi_{j+1})\right)\xi_{2l}\hat{f}(\xi_{2l})M_l    \end{gather}
with exact $2l$ convolutions. Here the term $M_l(\xi,\xi_1,\dots,\xi_{2l})$ can be computed as
\begin{align}
    M_l(\xi,\xi_1,\dots,\xi_{2l})&=\int_0^1dz_1\dots\int_0^1dz_{2l}\int_{\mathbb{R}}dy\frac{1-e^{-iy(\xi-\xi_1)}}{y}\\
    &\quad\quad \times \text{exp}\left(iy\sum_{j=1}^{2l-1}(z_j-1)(\xi_j-\xi_{j+1}+iy(z_{2l}-1)\xi_{2l})\right)\\
    &=\int_0^1dz_1\dots\int_0^1dz_{2l}\left(\PV\int_{\mathbb{R}}\text{exp}(iyG_1)\frac{dy}{y}-\PV\int_{\mathbb{R}}\text{exp}(iyG_2)\frac{dy}{y}\right)\\
    &=i\pi\int_0^1dz_1\dots\int_0^1dz_{2l}(\text{sgn}G_1-\text{sgn}G_2)
\end{align}
and
\begin{align}
    G_1=\sum_{j=1}^{2l-1}(z_j-1)(\xi_j-\xi_{j+1})+(z_{2l}-1)\xi_{2l}=-\xi_1+\sum_{j=1}^{2l}z_j\xi_j-\sum_{j=1}^{2l-1}z_j\xi_{j+1}.
\end{align}
Additionally
\begin{align}
    G_2&=-(\xi-\xi_1)+\sum_{j=1}^{2l-1}(z_j-1)(\xi_j-\xi_{j+1})+(z_{2l}-1)\xi_{2l}\\
    &=-\xi+\sum_{j=1}^{2l}z_j\xi_j-\sum_{j=1}^{2l-1}z_j\xi_{j+1}.
\end{align}
The above computations clearly show $|M_l|\leq2\pi$, which is enough to close Lemma \ref{prop_estimate_N1_lemma}.
\end{proof}
\begin{proof}[Proof of Proposition \ref{prop estimate N1}]
The Taylor expansion of $(\Delta_yf(x))^2/(1+(\Delta_yf(x))^2)$ can be written as:
\begin{equation*}
    \frac{(\Delta_yf(x))^2}{1+(\Delta_yf(x))^2}=\sum\limits_{n=1}^{\infty}(-1)^{n+1}(\Delta_yf(x))^{2n}.
\end{equation*}
Hence we are ready to bound $N_{k,k}$ as
\begin{align}
   &\int|\xi|^s\cdot|\mathscr{F}(N_{k,k})(\xi)|d\xi=\frac{\rho_{k-1}-\rho_k}{2\pi}\int |\xi|^s\left|\int\mathscr{F}\left(\partial_x\Delta_yg_k(x)\frac{(\Delta_yg_k(x))^2}{1+(\Delta_yg_k(x))^2}\right)dy\right|d\xi.\\
   &\lesssim\sum_{l=1}^{\infty}\int|\xi|^s\cdot\left|\int \mathscr{F}(\partial_x\Delta_yf(x)\cdot(\Delta_yf(x))^{2l})
        (\xi)dy\right|d\xi\\
    &\lesssim\int\int\dots\int|\xi|^s|\xi-\xi_1||\hat{f}(\xi-\xi_1)|\times\prod_{j=1}^{2l-1}|\xi_j-\xi_{j+1}||\hat{f}(\xi_j-\xi_{j+1})||\xi_{2l}||\hat{f}(\xi_{2l})|d\xi d\xi_1\dots d\xi_{2l}
\end{align}
using Lemma \ref{prop_estimate_N1_lemma}. Then the inequality $|\xi|^s\leq(2l+1)^{s-1}(|\xi-\xi_1|^s+\dots+|\xi_{2l}|^s)$ yields
\begin{equation}
    \int|\xi|^s\cdot|\mathscr{F}(N_{k,k})(\xi)|d\xi \lesssim\sum\limits_{l=1}^{\infty}(2l+1)^s(\|g_k\|_1)^{2l}\|g_k\|_{s+1},
\end{equation}
where we use the convolution type estimate
\begin{equation}\label{ineq_convolution}
    \|f_1*f_2\dots*f_n\|_{L^1}\leq\|f_1\|_{L^1}\|f_2\|_{L^1}\dots\|f_n\|_{L^1}.
\end{equation}
\end{proof}
We now take the notation that
\begin{equation}\label{def F 0s}
     F_{0,s}(x)=\sum\limits_{l=1}^{\infty}(2l+1)^sx^{2l}.
\end{equation}
So Proposition \ref{prop estimate N1} also admits the following form
\begin{equation}
    \int|\xi|^s\cdot|\mathscr{F}(N_{k,k})(\xi)|d\xi\lesssim F_{0,s}(\|g_k\|_1)\|g_k\|_{s+1}
\end{equation}

\subsection{Bounds on the nonlinear term $N_{k,j}$}
To derive the estimate for the remaining nonlinear term $N_{k,j}$ when $k\neq j$, we need to give the Taylor expansion of \eqref{def nonlinear k neq j} near the steady state $[g_k,g_j]_y(x)=0$. We now compute \eqref{def nonlinear k neq j} using the simplified notation $d_{k,j}=|d_j-d_k|$
\begin{align}\label{eq N k>j}
    N_{k,j}&=\frac{\rho_{k-1}-\rho_k}{2\pi}\int-\frac{y}{(y^2+d_{k,j}^2)^2}\partial_x[g_k,g_j]_y(x)\frac{[g_k,g_j]_y(x)(2+[g_k,g_j]_y(x))}{1+\frac{2}{y^2+d_{k,j}^2}[g_k,g_j]_y(x)+\frac{1}{y^2+d_{k,j}^2}([g_k,g_j]_y(x))^2}dy\\
    &=\frac{\rho_{k-1}-\rho_k}{2\pi}\int\sum\limits_{l=1}^{\infty}b_{l,k,j}(y)\partial_x[g_k,g_j]_y(x)([g_k,g_j]_y(x))^ldy,\qquad k>j
\end{align}
and
\begin{align}\label{eq N k<j}
    N_{k,j}&=\frac{\rho_{k-1}-\rho_k}{2\pi}\int\frac{y}{(y^2+d_{k,j}^2)^2}\partial_x[g_k,g_j]_y(x)\frac{[g_k,g_j]_y(x)(2-[g_k,g_j]_y(x))}{1-\frac{2}{y^2+d_{k,j}^2}[g_k,g_j]_y(x)+\frac{1}{y^2+d_{k,j}^2}([g_k,g_j]_y(x))^2}dy\\
    &=\frac{\rho_{k-1}-\rho_k}{2\pi}\int\sum\limits_{l=1}^{\infty}b_{l,k,j}(y)\partial_x[g_k,g_j]_y(x)([g_k,g_j]_y(x))^ldy,\qquad k<j.
\end{align}
It follows directly that the coefficients $b_{l,k,j}$ must obey the following
\begin{gather}\label{property of bl}
    \frac{y}{(y^2+d_{k,j}^2)^2}\frac{z(2-z)}{1-\frac{1}{y^2+d_{k,j}^2}z(2-z)}=\sum\limits_{l=1}^{\infty}b_{l,k,j}(y)z^l,\quad k<j,\\
    b_{l,k,j}(y)=-b_{l,k,j}(-y)=(-1)^lb_{l,j,k}(y).
\end{gather}
For all $z>0$ and $z$ close to 0, the inequality $z(2-z)<2z$ shows there exists universal constant $C_0$ such that
\begin{equation}\label{upper bound bl}
    |b_{l,k,j}(y)|\leq2^l\frac{y}{(y^2+d_{k,j}^2)^{l+1}}\leq C_0^l\frac{y}{(y^2+1)^{l+1}}
\end{equation}
With these preliminary estimates established, we now state the bounds for the nonlinear terms $N_{k,j}$ in the proposition below.
\begin{proposition}\label{prop nonlinear kj}
    For $\|g_k\|_{s+1}+\|g_j\|_{s+1},\|g_k\|_0+\|g_j\|_0,\|g_k\|_1+\|g_j\|_1<\infty$, the following estimates hold for nonlinear term $N_{k,j}$:
    \begin{align}
        \int|\xi|^s\cdot|\mathscr{F}(N_{k,j})(\xi)|d\xi&\lesssim F_{1,s}(\|g_k\|_0+\|g_j\|_0)\left(\|g_k\|_{\tla(\xi)|\xi|^{s+1}}+\|g_j\|_{\tla(\xi)|\xi|^{s+1}}\right)\\
            &+(\|g_k\|_1+\|g_j\|_1)F_{2,s}(\|g_k\|_0+\|g_j\|_0)\left(\|g_k\|_{\tla(\xi)|\xi|^{s}}+\|g_j\|_{\tla(\xi)|\xi|^{s}}\right)
    \end{align}
    where $\tla(\xi)$ is defined in \eqref{def tilde lambda}. In addition, $F_{1,s}$ and $F_{2,s}$ are analytic functions defined in a neighborhood of 0, admitting Taylor series expansions of the form
    \begin{gather}
        F_{1,s}(x)=xF_{2,s}(x),\\
        F_{2,s}(x)=\sum_{l=1}^{\infty}(2^sC_0)^lx^{l-1}.
    \end{gather}
\end{proposition}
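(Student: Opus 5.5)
Following the proof of Proposition \ref{prop estimate N1}, we expand $N_{k,j}$ by the series \eqref{eq N k>j}--\eqref{eq N k<j} and estimate term by term in Fourier space. The point where $k\neq j$ departs from $k=k$ is that the kernel $b_{l,k,j}(y)$ is now a smooth, decaying, odd function of $y$, namely of size $y/(y^2+1)^{l+1}$ by \eqref{upper bound bl}. So we do not use difference quotients. Instead the $y$-integral becomes a Fourier transform of $b_{l,k,j}$, and this transform supplies the extra low-frequency factor that we will bound by $\tla$.

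\textbf{Step 1 (Fourier representation).} Write $[g_k,g_j]_y(x)=g_k(x)-g_j(x-y)$. Its Fourier transform in $x$ is $\hat g_k(\eta)-e^{-iy\eta}\hat g_j(\eta)$, and that of $\partial_x[g_k,g_j]_y$ is $i\eta$ times this. The $l$-th term of $\mathscr F(N_{k,j})(\xi)$ is then an $(l+1)$-fold convolution. We expand the product, so that each factor is either a $g_k$ factor (no phase) or a $g_j$ factor (phase $e^{-iy\eta_m}$). After integrating in $y$, the phases produce $\hat b_{l,k,j}(\sigma)$, where $\sigma$ is the sum of the frequencies carried by the $g_j$ factors. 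This gives $2^{l+1}$ terms, each bounded by
\[
\int\!\!\cdots\!\!\int |\eta_0||\hat g_{\cdot}(\eta_0)|\prod_{m=1}^l|\hat g_{\cdot}(\eta_m)|\,|\hat b_{l,k,j}(\sigma)|\,d\eta .
\]

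\textbf{Step 2 (the gain from $b_l$).} Because $b_{l,k,j}$ is odd, $\hat b_{l,k,j}(0)=0$. Because it is $C^1$ with $|y\,b_{l,k,j}|$ integrable, we get $|\hat b_{l,k,j}(\sigma)|\lesssim C_0^l\min(|\sigma|,1)$ from \eqref{upper bound bl}. There are two points to settle here.

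First, we need $\min(|\sigma|,1)\lesssim\tla(|\sigma|)$. This holds near $0$ by \eqref{def tilde lambda}. It also holds globally, since $\tla$ is nondecreasing and positive away from $0$.

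Second, we must transfer $\tla(\sigma)$ onto a single frequency. Here $\sigma$ is a sum of at most $l+1$ frequencies, and $\tla$ is nondecreasing and comparable to $\min(|\cdot|,1)$, which is subadditive. Hence $\tla(\sigma)\lesssim\sum_m\tla(\eta_m)$, possibly up to an absorbable constant.

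The terms containing no $g_j$ factor have $\sigma=0$ and vanish, which is consistent with $N_{k,j}$ depending on the difference. So every surviving term carries at least one factor of the form $\tla(\eta_m)|\hat g_j(\eta_m)|$.

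\textbf{Step 3 (distributing $|\xi|^s$).} We use $|\xi|^s\le (l+1)^{s}\sum_m|\eta_m|^s$, or more crudely $2^{sl}$, which produces the factor $(2^sC_0)^l$ in $F_{2,s}$. The total number of weights to distribute is $|\xi|^s$, $|\eta_0|$ from the derivative, and one $\tla$. There are two cases.

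If $|\xi|^s$ and $\tla$ land on the same factor as $|\eta_0|$, that factor is bounded by $\|g\|_{\tla|\xi|^{s+1}}$. The remaining $l$ factors are bounded in $\|\cdot\|_0$, which gives $F_{1,s}=xF_{2,s}$.

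If they land on different factors, one factor carries $|\eta_0|$ and is bounded in $\|\cdot\|_1$. Another carries $\tla|\xi|^s$ and is bounded in $\|g\|_{\tla|\xi|^s}$, and the remaining $l-1$ factors are bounded in $\|\cdot\|_0$. This gives the second term.

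If $\tla$ and $|\xi|^s$ fall on different factors, we use $\tla\le C$, or equivalently interpolation, so that the weight on each factor is one of the ones appearing in the statement. The convolution estimate \eqref{ineq_convolution} then closes each term, and summing over $l$ with $\|g\|_0$ small gives convergence of $F_{2,s}$.

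The main obstacle is Step 2, in particular the rigorous subadditivity transfer of $\tla(\sigma)$ onto one frequency and uniform control of $\hat b_l$. If $\tla$ is not subadditive, I would first replace it by the comparable weight $\min(|\cdot|,1)$, which is subadditive, and use the comparability $\tla\approx\min(|\xi|,1)$ that follows from \eqref{def tilde lambda} and Proposition \ref{prop linear matrix}.
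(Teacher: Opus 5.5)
Your Steps 1 and 2 are sound, and they are a legitimate alternative to the paper's route. The paper instead folds $y\mapsto -y$ using the oddness of $b_{l,k,j}$, telescopes the difference of products, and bounds each difference by $|e^{2i\eta y}-1|\,|\hat g_j(\eta)|$. In your version, $\hat b_{l,k,j}(0)=0$ together with $b_{l,k,j},\,y\,b_{l,k,j}\in L^1$ gives directly
$|\hat b_{l,k,j}(\sigma)|\lesssim C_0^l\min(|\sigma|,1)\le C_0^l\sum_{m\in S}\min(|\eta_m|,1)\lesssim C_0^l\sum_{m\in S}\tla(\eta_m)$, so you never need $\tla(\sigma)$. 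The step you flagged as the main obstacle is therefore not the problem.

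The genuine gap is in the last case of Step 3, which you treat as routine. Suppose $\tla$ falls on factor $m$ and $|\xi|^s$ on a different factor $p$. Discarding $\tla\le C$ then leaves a factor with a bare weight $|\eta_p|^s$, or $|\eta_0|^{s+1}$ if $p=0$. That is a norm $\|g\|_s$ or $\|g\|_{s+1}$ with no $\tla$, and such norms are not controlled by the right-hand side, because $\tla(\eta)|\eta|^s\approx|\eta|^{s+1}\ll|\eta|^s$ at low frequency. For example, let $\hat g_k,\hat g_j$ have mass $M$ concentrated at $|\eta|\approx\delta\ll 1$. The term $\|g\|_{s+1}\|g\|_0^{l}\approx\delta^{s+1}M^{l+1}$ that you would produce then exceeds both terms of the statement, each of which is of size $\delta^{s+2}M^{l+1}$. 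Interpolation between Wiener norms cannot restore a missing low-frequency weight. Nor is the loss cosmetic: the later decay argument relies on every term carrying $\tla$ on the same factor as the top weight, so that it can be absorbed into the dissipation $E_{1,s+1}$.

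The missing ingredient is Chebyshev's sum inequality, which is why the paper insists that $\tla$ be nondecreasing. First enlarge $\sum_{m\in S}\tla(\eta_m)\le\sum_{m=0}^{l}\tla(\eta_m)$. Since $\tla(\eta)$ and $|\eta|^s$ are both nondecreasing functions of $|\eta|$, the two sequences are similarly ordered, so
\[
\Big(\sum_{m=0}^{l}\tla(\eta_m)\Big)\Big(\sum_{p=0}^{l}|\eta_p|^s\Big)\le (l+1)\sum_{m=0}^{l}\tla(\eta_m)|\eta_m|^s .
\]
This removes the cross case entirely, at the cost of a harmless factor $l+1$. Only your first two cases remain:
\begin{itemize}
\item $m=0$ gives $\|g\|_{\tla(\xi)|\xi|^{s+1}}\|g\|_0^{l}$;
\item $m\neq 0$ gives $\|g\|_1\|g\|_{\tla(\xi)|\xi|^{s}}\|g\|_0^{l-1}$.
\end{itemize}
Summed over $l$, these are exactly the $F_{1,s}$ and $F_{2,s}$ terms. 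Your expansion into $2^{l+1}$ terms also inflates the geometric ratio compared with the $l+1$ telescoped terms, so $C_0$ must be enlarged in $F_{2,s}$, but that is immaterial.
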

\begin{proof}
    Proposition \ref{prop nonlinear kj} follows immediately as a corollary of Lemma \ref{lemma_technique_kj} and Lemma \ref{lemma_kj}.
\end{proof}
Following the same approach as in the proof of Proposition \ref{prop estimate N1}, we estimate each term in the Taylor series. Below is a technical lemma which will be used to derive the upper bound.
\begin{lemma}\label{lemma_technique_kj}
    The following inequality holds for all $\xi\in\mathbb{R}$ and $l\in\mathbb{N}^+$
    \begin{equation}
        \int_0^{\infty}|b_{l,k,j}(y)||e^{2i\xi y}-1|dy
\lesssim C_0^l\tla(\xi).
    \end{equation}
\end{lemma}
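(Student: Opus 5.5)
We bound the integrand pointwise with \eqref{upper bound bl}, split into low and high frequencies, and then use the properties of $\tla$ from \eqref{def tilde lambda}. By \eqref{upper bound bl}, for $y>0$ we have $|b_{l,k,j}(y)|\le C_0^l\, y(1+y^2)^{-l-1}\le C_0^l\, y(1+y^2)^{-2}$ for all $l\ge1$. It therefore suffices to show
\begin{equation}
I(\xi):=\int_0^\infty \frac{y}{(1+y^2)^2}\,|e^{2i\xi y}-1|\,dy\lesssim \tla(\xi),
\end{equation}
with an implicit constant independent of $l$. By symmetry we may take $\xi\ge0$. The case $\xi=0$ is trivial, since $I(0)=0=\tla(0)$.

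\emph{Low frequencies.} Use $|e^{2i\xi y}-1|\le 2|\xi|y$, which gives
\begin{equation}
I(\xi)\le 2|\xi|\int_0^\infty\frac{y^2}{(1+y^2)^2}dy\lesssim|\xi|.
\end{equation}
For $0<|\xi|<\epsilon$, the choice of $\tla$ in \eqref{def tilde lambda} gives $\frac{\alpha}{2}|\xi|\le\tla(|\xi|)$. Hence $I(\xi)\lesssim \alpha^{-1}\tla(\xi)$ in this range. The quadratic decay of the kernel $y(1+y^2)^{-2}$ is what makes $\int y^2(1+y^2)^{-2}\,dy$ finite. This is exactly why we keep the full power $(1+y^2)^{-l-1}$ with $l\ge1$, rather than only $(1+y^2)^{-1}$, for which the integral would diverge logarithmically.

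\emph{High frequencies.} For $|\xi|\ge\epsilon$, use the crude bound $|e^{2i\xi y}-1|\le2$, which gives $I(\xi)\le 2\int_0^\infty y(1+y^2)^{-2}dy=1$. Since $\tla$ is even and non-decreasing on $(0,\infty)$, we have $\tla(\xi)\ge\tla(\epsilon)\ge\frac{\alpha}{2}\epsilon>0$ for all $|\xi|\ge\epsilon$. Therefore $I(\xi)\le 2(\alpha\epsilon)^{-1}\tla(\xi)$.

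Combining the two regimes yields the lemma with implicit constant $\max\{C,2(\alpha\epsilon)^{-1}\}$, which depends only on $d_k$ and $\rho_k$. The only delicate point is the low-frequency regime. There one must extract a factor linear in $|\xi|$ to match the linear vanishing of $\lambda_1$ from Proposition \ref{prop linear matrix}\ref{item:asymptotic-eigenvalues}, and one must check that the moment $\int y^2|b_{l,k,j}|\,dy$ is finite uniformly in $l$ up to the factor $C_0^l$. If the pointwise bound \eqref{upper bound bl} turns out to be valid only for small arguments, I would instead bound $b_{l,k,j}$ through Cauchy estimates for the generating function in \eqref{property of bl} on a disc of fixed radius in $z$. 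This gives the same $y(1+y^2)^{-2}$ decay with a possibly larger $C_0$.
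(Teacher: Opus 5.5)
Your proof is correct and follows essentially the paper's route: bound $|b_{l,k,j}|$ pointwise via \eqref{upper bound bl}, use $|e^{2i\xi y}-1|\le 2|\xi|y$ at low frequencies and $|e^{2i\xi y}-1|\le 2$ at high frequencies, and compare with $\tla$. The paper splits at $|\xi|=1$ and evaluates the $l$-dependent integrals exactly; your split at $\epsilon$, combined with the monotonicity of $\tla$, is the cleaner way to justify $|\xi|\lesssim\tla(\xi)$ and $1\lesssim\tla(\xi)$ on the two ranges. One caveat: \eqref{upper bound bl}, on which the paper's proof also rests, is false as stated for $l\ge 2$, since from \eqref{property of bl} one computes $b_{2,k,j}(y)=y\,(4-y^2-d_{k,j}^2)\,(y^2+d_{k,j}^2)^{-3}$, which decays only like $y^{-3}$. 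So it is your Cauchy-estimate fallback on a disc of radius depending only on $d_{k,j}$ that makes the argument rigorous: it gives $|b_{l,k,j}(y)|\lesssim C^l\,y\,(1+y^2)^{-2}$, which is the only consequence you actually use.
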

\begin{proof}
    Recall the estimate in  \eqref{upper bound bl} we are able to get
    \begin{equation}
        \int_0^{\infty}|b_{l,k,j}(y)||e^{2i\xi y}-1|dy\lesssim C_0^l\int_0^{\infty}\frac{y}{(y^2+1)^{l+1}}|e^{2i\xi y}-1|dy.
    \end{equation}
Then we see that
\begin{align*}
    \int_0^{\infty}\frac{y}{(y^2+1)^{l+1}}|e^{2i\xi y}-1|dy&\leq \int_0^{\infty}\frac{2y}{(y^2+1)^{l+1}}dy\\
    &=\frac{1}{l}\lesssim\tla(\xi)
\end{align*}
for $|\xi|>1$, and
\begin{align*}
    \int_0^{\infty}\frac{y}{(y^2+1)^{l+1}}|e^{2i\xi y}-1|dy&\leq\int_0^{\infty}|\xi|\frac{2y^2}{(y^2+1)^{l+1}}dy\\
    &=\frac{\pi}{2}\frac{(2l-3)!!}{(2l-2)!!}\frac{1}{l}|\xi|\lesssim \tla(\xi)
\end{align*}
for $|\xi|<1$ by noticing \eqref{def tilde lambda}. Thus this completes the proof.
\end{proof}
\begin{lemma}\label{lemma_kj}
    For each $l$ we have
    \begin{align}
        \int|\xi|^s&\left|\int b_{l,k,j}(y)\mathscr{F}(\partial_x[g_k,g_j]_y(x)([g_k,g_j]_y(x))^l)(\xi)dy\right|d\xi\\
       & \lesssim (2^{s+1}C_0)^l(\|g_k\|_0+\|g_j\|_0)^l\left(\|g_k\|_{\tla(\xi)|\xi|^{s+1}}+\|g_j\|_{\tla(\xi)|\xi|^{s+1}}\right)\\
        &+(2^{s+1}C_0)^l(\|g_k\|_1+\|g_j\|_1)(\|g_k\|_0+\|g_j\|_0)^{l-1}\left(\|g_k\|_{\tla(\xi)|\xi|^{s}}+\|g_j\|_{\tla(\xi)|\xi|^{s}}\right).
    \end{align}
\end{lemma}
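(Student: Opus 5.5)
Write $u=[g_k,g_j]_y$, so that $\hat u(\xi,y)=\hat g_k(\xi)-e^{-iy\xi}\hat g_j(\xi)$. We handle the $(l+1)$-linear expression $\partial_x u\, u^l$ the same way as in Lemma \ref{prop_estimate_N1_lemma}.

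First we take the Fourier transform in $x$. This writes $\mathscr{F}(\partial_x u\,u^l)(\xi)$ as an $l$-fold convolution over frequencies $\eta_0+\eta_1+\dots+\eta_l=\xi$, with integrand
\[
i\eta_0\,\hat u(\eta_0,y)\prod_{m=1}^l\hat u(\eta_m,y).
\]
Expanding every factor produces $2^{l+1}$ terms. Each term is a product of $\hat g_k$'s and $\hat g_j$'s multiplied by a phase $e^{-iy\sigma}$, where $\sigma$ is the sum of the frequencies carried by the $g_j$ factors.

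Next we exchange the order of integration and perform the $y$-integral first. Since $b_{l,k,j}$ is odd in $y$ by \eqref{property of bl}, the $y$-integral of a term with phase $e^{-iy\sigma}$ equals
\[
\int_0^\infty b_{l,k,j}(y)\left(e^{-iy\sigma}-e^{iy\sigma}\right)dy .
\]
Because $|e^{-iy\sigma}-e^{iy\sigma}|=|e^{2iy\sigma}-1|$, Lemma \ref{lemma_technique_kj} bounds its absolute value by $C_0^l\tla(\sigma)$. The pure $g_k$ term has $\sigma=0$, and it vanishes since $\tla(0)=0$ (equivalently, by oddness). In every other term, $\sigma$ is a sum of at most $l+1$ of the frequencies $\eta_m$. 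This is the main obstacle: we need to move $\tla(\sigma)$ onto a single factor. Since $\tla$ is even and nondecreasing in $|\sigma|$, we have $\tla(\sigma)\le\tla\bigl((l+1)\max_m|\eta_m|\bigr)$. We also need subadditivity or a doubling property, $\tla(c\eta)\lesssim c\,\tla(\eta)$. If \eqref{def tilde lambda} does not give this directly, we will simply choose $\tla$ concave; for example, $\tla(\xi)=c\min(|\xi|,1)$ satisfies every requirement of \eqref{def tilde lambda}. Then
\[
\tla(\sigma)\le\sum_m\tla(\eta_m),
\]
with each summand attached to the corresponding frequency.

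Finally we distribute the weight $|\xi|^s$ using $|\xi|^s\le(l+1)^s\sum_m|\eta_m|^s$, and place the factor $|\eta_0|$ coming from the derivative. After summing, each piece has $\tla$ and some power of the frequency concentrated on one or two factors, while all remaining factors carry no weight. Integrating with \eqref{ineq_convolution} gives two types of bound:
\begin{itemize}
\item If both $|\eta_0|$ and the factor $|\eta_m|^s\tla(\eta_m)$ sit on the same function, we obtain $\|\cdot\|_{\tla(\xi)|\xi|^{s+1}}$ times $(\|g_k\|_0+\|g_j\|_0)^l$.
\item If they sit on different functions, we obtain $(\|g_k\|_1+\|g_j\|_1)\,\|\cdot\|_{\tla(\xi)|\xi|^{s}}\,(\|g_k\|_0+\|g_j\|_0)^{l-1}$.
\end{itemize}
Mixed placements, where $\tla$ and $|\cdot|^s$ land on different factors, are reduced to these two cases. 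Since $\tla$ is bounded, $\tla\lesssim 1$; if $\tla$ lands on a factor $f$ with no other weight, we bound $\|f\|_{\tla}$ by $\|f\|_0$ only when needed. Alternatively, we apply Young's inequality $a^s b\lesssim a^s\tla(a)+\dots$ in order to keep $\tla$ together with the top power. Counting the $2^{l+1}$ expansion terms, the factor $(l+1)^{s}\lesssim 2^{sl}$, and the constant $C_0^l$, the total is absorbed into $(2^{s+1}C_0)^l$ (after enlarging $C_0$ if necessary). This gives the stated inequality.
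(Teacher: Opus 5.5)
Your route to the intermediate bound differs from the paper's, and it is sound. The paper never expands into monomials. Using oddness of $b_{l,k,j}$, it writes the $y$-integrand as $\prod_m\tilde m(\eta_m,y)-\prod_m\tilde m(\eta_m,-y)$ and telescopes this into $l+1$ terms. Each term contains a single difference $\tilde m(\eta_q,y)-\tilde m(\eta_q,-y)$, bounded by $|e^{2i\eta_q y}-1|\,|\hat g_j(\eta_q)|$. Lemma~\ref{lemma_technique_kj} then places $\tilde\lambda$ on one frequency directly, using only monotonicity of $\tilde\lambda$.

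You instead integrate each monomial exactly in $y$, which yields $\tilde\lambda(\sigma_S)$, so you need subadditivity. This is available: take $c\min(|\xi|,1)$ with $c$ small and $\alpha$ shrunk if necessary. Alternatively, any admissible $\tilde\lambda$ is subadditive up to an $l$-independent constant, since \eqref{def tilde lambda} forces $\tilde\lambda\approx\min(|\xi|,1)$. Bound $\sum_{m\in S}\tilde\lambda(\eta_m)\le\sum_{m=0}^l\tilde\lambda(\eta_m)$ and sum the monomials in absolute value. This reassembles $\prod_m(|\hat g_k|+|\hat g_j|)(\eta_m)$, so you land exactly on the paper's bound $C_0^l\bigl(\sum_m\tilde\lambda(\eta_m)\bigr)|\eta_0|\prod_m(|\hat g_k|+|\hat g_j|)(\eta_m)$, with no $2^{l+1}$ loss.

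The gap is in the final step: the mixed placements where $\tilde\lambda(\eta_b)$ and $|\eta_a|^s$ sit on different factors. Your first fix, $\tilde\lambda\lesssim1$, fails. It leaves bounds like $\|g\|_{s+1}\|g\|_0^{l}$ or $\|g\|_1\|g\|_s\|g\|_0^{l-1}$ (with $\|g\|_r$ short for $\|g_k\|_r+\|g_j\|_r$), and the right-hand side of the lemma does not control these. Take $\hat g_k,\hat g_j$ supported where $|\xi|\sim\delta\ll1$, normalized so that $\|g\|_0\approx1$. Then these quantities are of size $\delta^{s+1}$, while both terms on the right are of size $\delta^{s+2}$. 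The point of the lemma is to keep this extra low-frequency factor $\tilde\lambda\approx|\xi|$, so that the nonlinearity can be absorbed by the dissipation $E_{1,s+1}$.

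Your second fix is not Young's inequality: Young splits $|\eta_a|^s$ and $\tilde\lambda(\eta_b)$ into separate powers and never produces $\tilde\lambda(\eta_a)|\eta_a|^s$. What you need is the rearrangement fact that $\tilde\lambda$ and $|\cdot|^s$ are both even and non-decreasing in $|\eta|$. Hence
\[
\tilde\lambda(\eta_b)|\eta_a|^s\le\tilde\lambda(\eta_a)|\eta_a|^s+\tilde\lambda(\eta_b)|\eta_b|^s,
\]
that is, put both weights on whichever of $\eta_a,\eta_b$ has the larger modulus. In summed form this is Chebyshev's sum inequality $\bigl(\sum_m\tilde\lambda(\eta_m)\bigr)\bigl(\sum_m|\eta_m|^s\bigr)\le(l+1)\sum_m\tilde\lambda(\eta_m)|\eta_m|^s$, which is exactly the step the paper uses. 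With it, your two bulleted cases are exhaustive and \eqref{ineq_convolution} finishes the proof.
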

\begin{proof}
    We first obtain
    \begin{equation}
        \mathscr{F}([g_k,g_j]_y)(\xi)=(\hat{g_k}-e^{-i\xi y}\hat{g_j})(\xi)=\m(\xi,y),\quad\mathscr{F}(\partial_x([g_k,g_j]_y))(\xi)=i\xi \m(\xi,y).
    \end{equation}
    Then follow the steps in the proof of Proposition \ref{prop estimate N1} we are able to write the Fourier transform as $l$ convolutions
    \begin{equation}
        \mathscr{F}(\partial_x([g_k,g_j]_y)([g_k,g_j]_y)^l)(\xi)=((i\xi \m)*(\m)*\dots*(\m))(\xi,y).
    \end{equation}
    Then we compute the left hand side of Lemma \ref{lemma_kj} as below by noting that $b_{l,k,j}(y)$ is odd with respect to $y$ in \eqref{property of bl}.
    \begin{align}
        &\int b_{l,k,j}(y)\mathscr{F}(\partial_x([g_k,g_j]_y)([g_k,g_j]_y)^l)(\xi)dy\\
    &=\int_0^{\infty}dy\int d\xi_1\cdots\int d\xi_l
    b_{l,k,j}(y)i(\xi-\xi_1)(\m(\xi-\xi_1,y)\cdots \m(\xi_l,y)-\m(\xi-\xi_1,-y)\cdots \m(\xi_l,-y)).
    \end{align}
    We can then express the difference term in the following form
    \begin{align}
        \m(\xi-\xi_1,y)\cdots \m(\xi_l,y)-\m(\xi-\xi_1,-y)&\cdots \m(\xi_l,-y)\\
        =\sum\limits_{q=0}^{l}\m(\xi_0-\xi_1,y)\dots \m(\xi_{q-1}-\xi_q,y)&\Big(\m(\xi_q-\xi_{q+1},y)-\m(\xi_q-\xi_{q+1},-y)\Big)\cdot\\
    &\m(\xi_{q+1}-\xi_{q+2},-y)\dots \m(\xi_l,-y)
    \end{align}
    with simplified notations $\xi_0=\xi$ and $\xi_{l+1}=0$. For arbitrary $\xi\in\mathbb{R}$, the following estimate holds
\begin{gather}
    |\m(\xi,y)|\leq(|\hat{g_k}|+|\hat{g_j}|)(\xi)\label{ineq_tildem1},\\
    |\m(\xi,y)-\m(\xi,-y)|\leq |e^{2i\xi y}-1||\hat{g_j}|(\xi)\leq|e^{2i\xi y}-1|(|\hat{g_k}|+|\hat{g_j}|)(\xi)\label{ineq_tildem2}.
\end{gather}
Integral the expression in Lemma \ref{lemma_kj} over $y$ by using the above estimate 
\eqref{ineq_tildem1}, \eqref{ineq_tildem2} and Lemma \ref{lemma_technique_kj} leads the following
\begin{align}
    &\int|b_{l,k,j}(y)\mathscr{F}(\partial_x([g_k,g_j]_y)([g_k,g_j]_y)^l)(\xi)|dy\\
    &\lesssim\int\dots\int  C_0^l(\tla(\xi-\xi_1)+\dots+\tla(\xi_l))|\xi-\xi_1|(|\hat{g_k}|+|\hat{g_j}|)(\xi-\xi_1)\dots(|\hat{g_k}|+|\hat{g_j}|)(\xi_l)d\xi_1\dots d\xi_l.
\end{align}
As in Lemma \ref{prop_estimate_N1_lemma} we employ the inequality
\begin{equation}
    |\xi|^s\leq(l+1)^{s-1}(|\xi-\xi_1|^s+\dots+|\xi_l|^s)
\end{equation}
and the estimate
\begin{equation}
    (\tla(\xi-\xi_1)+\dots+\tla(\xi_l))(|\xi-\xi_1|^s+\dots+|\xi_l|^s)\leq(l+1)(\tla(\xi-\xi_1)|\xi-\xi_1|^s+\dots+\tla(\xi_l)|\xi_l|^s)
\end{equation}
recalling $\tla(\xi)$ is non-decreasing as in \eqref{def tilde lambda}. Thus we obtain
\begin{align}
    &\int|\xi|^s\left|\int b_{l,k,j}(y)\mathscr{F}(\partial_x[g_k,g_j]_y(x)([g_k,g_j]_y(x))^l)(\xi)dy\right|d\xi\\
        &\lesssim C_0^l(l+1)^s\int\dots\int|\xi-\xi_1|(\tla(\xi-\xi_1)|\xi-\xi_1|^s+\dots+\tla(\xi_l)|\xi_l|^s)\\
        &\quad\times(|\hat{g_k}|+|\hat{g_j}|)(\xi-\xi_1)\dots(|\hat{g_k}|+|\hat{g_j}|)(\xi_l)d\xi_1\dots d\xi_ld\xi\\        
        & \lesssim (2^{s+1}C_0)^l(\|g_k\|_0+\|g_j\|_0)^l(\|g_k\|_{\tla(\xi)|\xi|^{s+1}}+\|g_j\|_{\tla(\xi)|\xi|^{s+1}})\\
        &\quad+(2^{s+1}C_0)^l(\|g_k\|_1+\|g_j\|_1)(\|g_k\|_0+\|g_j\|_0)^{l-1}(\|g_k\|_{\tla(\xi)|\xi|^{s}}+\|g_j\|_{\tla(\xi)|\xi|^{s}})
\end{align}
where the second inequality follows by expressing the integral in convolution form and reapplying inequality \eqref{ineq_convolution}.
\end{proof}

\subsection{Large time decay estimate}
To begin the analysis of large time decay, we introduce the simplified notation
\begin{equation}\label{notation EF}
    \begin{aligned}
        E_{0,s}(t)=\sum_{i=1}^n\|h_i(t)\|_{s},&\quad E_{1,s}(t)=\sum_{i=1}^n\|h_i(t)\|_{\tla(\xi)|\xi|^s},\\
        \TF_{0,s}(t)=F_{0,s}\left(\sum_{i=1}^n\|h_i(t)\|_1\right)=F_{0,s}\left(E_{0,1}(t)\right)&,\quad \TF_{1,s}(t)=F_{1,s}(E_{0,0}(t)),\quad \TF_{2,s}(t)=F_{2,s}(E_{0,0}(t)).
    \end{aligned}
\end{equation}
Recalling equation \eqref{eq vector g}, \eqref{def diagonalize} and \eqref{def h}  
 we obtain
\begin{equation}
    \frac{d}{dt}\vec{h}(\xi,t)=-|\xi|D(\xi)\vec{h}(\xi,t)+P^{-1}(\xi)\widehat{\vec{N}}(\xi,t).
\end{equation}
It follows
\begin{equation}\label{ineq begin hi}
    \begin{aligned}
    \frac{d}{dt}\|h_i\|_s&=\int|\xi|^s(\partial_t\hat{h}_i(\xi)\overline{\hat{h}_i(\xi)}+\hat{h}_i(\xi)\partial_t\overline{\hat{h}_i(\xi)})/(2|\hat{h}_i(\xi)|)d\xi\\
        &\leq-\int\lambda_i(\xi)|\xi|^{s+1}|\hat{h}_i(\xi)|d\xi+C\sum_{j,k=1}^n\int|\xi|^s|\mathscr{F}(N_{k,j})(\xi)|d\xi\\
        &\leq-\|h_i\|_{\tla(\xi)|\xi|^{s+1}}+C\sum_{j,k=1}^n\int|\xi|^s|\mathscr{F}(N_{k,j})(\xi)|d\xi.
\end{aligned}
\end{equation}
for each $1\leq i\leq n$. Then we set $a\delta=1$ where constant $a>\frac{1}{2}$ will be fixed later to obtain
\begin{gather}
    \frac{d}{dt}\left((1+\delta t)^a\|h_i\|_s\right)\leq(1+\delta t)^a\left((1+\delta t)^{-1}\|h_i\|_s+\frac{d}{dt}\|h_i\|_s\right),\label{ineq pre dt hi a}\\
    \frac{d}{dt}\left((1+\delta t)^{a-\frac{1}{2}}\|h_i\|_s\right)\leq(1+\delta t)^{a-\frac{1}{2}}\left(\frac{2a-1}{2a}(1+\delta t)^{-1}\|h_i\|_s+\frac{d}{dt}\|h_i\|_s\right).\label{ineq pre dt hi a-1}
\end{gather}
We decompose the integral for  $\|h_i\|_s$ into two parts $|\xi|<r$ and $|\xi|>r$:
\begin{equation}\label{ineq hi split a}
    \begin{aligned}
        \|h_i\|_s&\leq\int_{|\xi|<r}|\xi|^s|\hat{h_i}(\xi)|d\xi+\frac{2}{\alpha}\frac{1}{r^2}\int_{|\xi|>r}\tla(\xi)|\xi|^{s+1}|\hat{h_i}(\xi)|d\xi\\
        &\leq\int_{|\xi|\lesssim(1+\delta t)^{-\frac{1}{2}}}|\xi|^s|\hat{h_i}(\xi)|d\xi+\frac{1}{2}(1+\delta t)\|h_i\|_{\tla(\xi)|\xi|^{s+1}}\\
        &\leq C(1+\delta t)^{-\frac{s}{2}-\frac{1}{4}}\|h_i\|_{L^2}+\frac{1}{2}(1+\delta t)\|h_i\|_{\tla(\xi)|\xi|^{s+1}}.
    \end{aligned}
\end{equation}
Here we gain the first inequality by using the asymptotic behavior in \eqref{def tilde lambda}. The second inequality holds if we take $r=c(1+\delta t)^{-\frac{1}{2}}$ for some universal constant $c$ and the third inequality is due to Cauchy-Schwarz inequality with a universal constant $C$. For the bound of nonlinear terms we apply Proposition \ref{prop estimate N1} , Proposition \ref{prop nonlinear kj} and the notation \eqref{notation EF} to gain
\begin{equation}\label{ineq nonlinear cauchy}
    \begin{aligned}
    \sum_{j,k=1}^n\int|\xi|^s|\mathscr{F}(N_{k,j})(\xi)|d\xi\leq C\left( \TF_{0,s}E_{0,s+1}+\TF_{1,s}E_{1,s+1}+E_{0,1}\TF_{2,s}E_{1,s}\right)\\
    \leq \left(2\epsilon+C\TF_{1,s}+\frac{C}{\epsilon}\TF_{0,s}^2+\frac{C}{\epsilon}E_{0,1}^2\TF_{2,s}^2\right) E_{1,s+1}+\frac{C}{\epsilon}\left(\TF_{0,s}^2+E_{0,1}^2\TF_{2,s}^2\right)E_{0,s}.
\end{aligned}
\end{equation}
Here we use the Cauchy-Schwarz estimate
\begin{align}
    \TF_{0,s}E_{0,s+1}&=\TF_{0,s}\sum_{i=1}^n\int|\xi|^{s+1}|\hat{h}_i|d\xi\leq\sum_{i=1}^n\left(\epsilon\int\tla(\xi)|\xi|^{s+1}|\hat{h}_i|d\xi+\frac{C}{\epsilon}\TF_{0,s}^2\int\tla(\xi)^{-1}|\xi|^{s+1}|\hat{h}_i|d\xi\right)\\
    &\leq\sum_{i=1}^n\left(\epsilon\int\tla(\xi)|\xi|^{s+1}|\hat{h}_i|d\xi+\frac{C}{\epsilon}\TF_{0,s}^2\int\left(|\xi|^{-1}+\tla(\xi)\right)|\xi|^{s+1}|\hat{h}_i|d\xi\right)\\
    &\leq\sum_{i=1}^n\left(\left(\epsilon+\frac{C}{\epsilon}\TF_{0,s}^2\right)\int\tla(\xi)|\xi|^{s+1}|\hat{h}_i|d\xi+\frac{C}{\epsilon}\TF_{0,s}^2\int|\xi|^{s}|\hat{h}_i|d\xi\right)\\
    &=\left(\epsilon+\frac{C}{\epsilon}\TF_{0,s}^2\right) E_{1,s+1}+\frac{C}{\epsilon}\TF_{0,s}^2E_{0,s}
\end{align}
and
\begin{align}
    E_{0,1}\TF_{2,s}E_{1,s}&=E_{0,1}\TF_{2,s}\sum_{i=1}^n\int\tla(\xi)|\xi|^s|\hat{h}_i|d\xi\leq CE_{0,1}\TF_{2,s}\sum_{i=1}^n\int|\xi|^{s+1}|\hat{h}_i|d\xi\\
    &\leq\left(\epsilon+\frac{C}{\epsilon}E_{0,1}^2\TF_{2,s}^2\right) E_{1,s+1}+\frac{C}{\epsilon}E_{0,1}^2\TF_{2,s}^2E_{0,s}
\end{align}
where we use the properties of $\tla(\xi)$:
\begin{equation}
    \tla(\xi)^{-1}\lesssim|\xi|^{-1}+\tla(\xi),\quad \tla(\xi)\lesssim|\xi|
\end{equation}
by recalling \eqref{def tilde lambda}.
Combining the inequalities \eqref{ineq begin hi}, \eqref{ineq pre dt hi a}, \eqref{ineq pre dt hi a-1}, \eqref{ineq hi split a} and \eqref{ineq nonlinear cauchy} by choosing a sufficiently small constant $\epsilon$ we get
\begin{equation}\label{ineq dt hi a}
    \begin{aligned}
    \frac{d}{dt}&\left((1+\delta t)^aE_{0,s}\right)\\
    \leq &C(1+\delta t)^{a-\frac{s}{2}-\frac{5}{4}}\sum_{i=1}^n\|h_i\|_{L^2}-(1+\delta t)^a\left(\frac{1}{4}-C\left(\TF_{1,s}+\TF_{0,s}^2+E_{0,1}^2\TF_{2,s}^2\right)\right)E_{1,s+1}\\
    &+(1+\delta t)^aC\left(\TF_{0,s}^2+E_{0,1}^2\TF_{2,s}^2\right)E_{0,s}
\end{aligned}
\end{equation}
and similarly
\begin{equation}\label{ineq dt hi a-1}
    \begin{aligned}
    \frac{d}{dt}&\left((1+\delta t)^{a-\frac{1}{2}}E_{0,s}\right)\\
    \leq &C(1+\delta t)^{a-\frac{1}{2}-\frac{s}{2}-\frac{5}{4}}\sum_{i=1}^n\|h_i\|_{L^2}-(1+\delta t)^{a-\frac{1}{2}}\left(\frac{1}{4}-C\left(\TF_{1,s}+\TF_{0,s}^2+E_{0,1}^2\TF_{2,s}^2\right)\right)E_{1,s+1}\\
    &+(1+\delta t)^{a-\frac{1}{2}}C\left(\TF_{0,s}^2+E_{0,1}^2\TF_{2,s}^2\right)E_{0,s}.
\end{aligned}
\end{equation}
These computations lead to the following proposition.
\begin{proposition}\label{prop decay s01}
    There exists a universal constant $\gamma$ such that if the initial data satisfies 
\begin{equation}
    \sum_{j=1}^{n}\left(\|\tilde{g}_j\|_{L^2}+\|\tilde{g}_j\|_0+\|\tilde{g}_j\|_1\right)\leq\gamma,
\end{equation}
then we have large time decay estimates
\begin{equation}
    \sum_{j=1}^n\|g_j(t)\|_0\lesssim(1+t)^{-\frac{1}{4}},\quad \sum_{j=1}^n\|g_j(t)\|_1\lesssim(1+t)^{-\frac{3}{4}}.
\end{equation}
\end{proposition}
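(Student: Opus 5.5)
We propose to prove Proposition \ref{prop decay s01} by a bootstrap (continuity) argument built on the differential inequalities \eqref{ineq dt hi a} and \eqref{ineq dt hi a-1} with $s=0$ and $s=1$, together with the $L^2$ maximum principle. By Remark \ref{remark gh} it suffices to work with $\vec h$, i.e.\ with $E_{0,0}$ and $E_{0,1}$. The first ingredient is Theorem \ref{energy conservation}. Since $\rho_{j-1}-\rho_j>0$, it gives
\[
\sum_j\|g_j(t)\|_{L^2}\lesssim\sum_j\|\tilde g_j\|_{L^2}\le\gamma
\]
uniformly in time, and by Plancherel the same bound holds for $\sum_i\|h_i(t)\|_{L^2}$. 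This controls the low-frequency source term $C(1+\delta t)^{a-\frac{s}{2}-\frac54}\sum_i\|h_i\|_{L^2}$ in \eqref{ineq dt hi a}.

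\textbf{Step 1 (uniform smallness).} We first show that $E_{0,0}(t)+E_{0,1}(t)\le C_1\gamma$ for all $t$. Define $T^*$ as the supremum of times on which $E_{0,0}+E_{0,1}\le 2C_1\gamma$. On $[0,T^*)$ the coefficients $\TF_{0,s},\TF_{1,s},\TF_{2,s},E_{0,1}$ are $O(\gamma)$ for $s\in\{0,1\}$, since the analytic functions $F_{\cdot,s}$ vanish at $0$ (or are bounded) near $0$. Hence for $\gamma$ small the bracket $\tfrac14-C(\dots)$ is at least $\tfrac18$, which gives a dissipative term $-\tfrac18(1+\delta t)^a E_{1,s+1}$. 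The last term $C(\TF_{0,s}^2+E_{0,1}^2\TF_{2,s}^2)E_{0,s}$ is $O(\gamma^2)E_{0,s}$.

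\textbf{Step 2 (absorbing the error term).} We bound $E_{0,s}$ via \eqref{ineq hi split a} by $C(1+\delta t)^{-\frac s2-\frac14}\|h\|_{L^2}+\tfrac12(1+\delta t)E_{1,s+1}$. The resulting $O(\gamma^2)(1+\delta t)^{a+1}E_{1,s+1}$ piece is not absorbed by the dissipation directly, because of the extra power of $(1+\delta t)$. This is why the paper prepares the weaker weight $(1+\delta t)^{a-\frac12}$ in \eqref{ineq dt hi a-1}. I would instead handle this term by Gronwall-type reasoning: it equals $O(\gamma^2)(1+\delta t)^{-1}\cdot(1+\delta t)^aE_{0,s}$ only if one had the decay bound itself. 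Therefore I would bootstrap the decay statement directly.

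\textbf{Step 3 (decay bootstrap).} We take as bootstrap hypothesis
\[
E_{0,0}\le M\gamma(1+t)^{-1/4},\qquad E_{0,1}\le M\gamma(1+t)^{-3/4}.
\]
Then $\TF_{0,s}^2+E_{0,1}^2\TF_{2,s}^2\lesssim \gamma^2 M^2(1+t)^{-3/2}$, using $F_{0,s}(x)=O(x^2)$ and $E_{0,1}^2=O((1+t)^{-3/2})$. The error term is then integrable against $(1+\delta t)^aE_{0,s}$. We choose $a$ slightly larger than $\frac s2+\frac14$, say $a=\frac s2+\frac14+\frac12$, where the condition $a>\frac12$ holds. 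Integrating \eqref{ineq dt hi a} and dropping the negative dissipation gives
\[
(1+\delta t)^aE_{0,s}(t)\le E_{0,s}(0)+C\gamma\int_0^t(1+\delta\tau)^{a-\frac s2-\frac54}d\tau+C\gamma^2M^2\int_0^t(1+\delta\tau)^{-\frac32}(1+\delta\tau)^{a}E_{0,s}\,d\tau.
\]
The middle integral is $\lesssim(1+\delta t)^{a-\frac s2-\frac14}$, and the last term is handled by Gronwall because $(1+\delta\tau)^{-3/2}$ is integrable. This yields $E_{0,s}\le C'\gamma(1+t)^{-\frac s2-\frac14}$ with $C'$ independent of $M$. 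Choosing $M=2C'$ and $\gamma$ small closes the bootstrap.

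The main obstacle is the bookkeeping of the Step 2 term: one must check that every nonlinear coefficient decays at an integrable rate. For $s=0$ this requires $F_{0,0}(E_{0,1})^2\sim E_{0,1}^4$ and $E_{0,1}^2F_{2,0}^2\sim E_{0,1}^2$, so the $s=0$ and $s=1$ estimates must be bootstrapped simultaneously. The inequality \eqref{ineq dt hi a-1} can serve as a fallback if the direct Gronwall argument loses a power.
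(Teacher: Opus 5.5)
Your argument is correct, but it closes the bootstrap by a different mechanism than the paper.

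\textbf{The paper's route.} It bootstraps a single functional $\TE(t)=(1+\delta t)^aE_{0,1}+(1+\delta t)^{a-\frac12}E_{0,0}$ with one exponent $\frac34<a\le 1$. It pairs \eqref{ineq dt hi a} at $s=1$ with \eqref{ineq dt hi a-1} at $s=0$, so that both $L^2$ source terms carry the same power $(1+\delta t)^{a-\frac74}$. It does not use Gronwall. Instead it treats the nonlinear term as a forcing, $(\TF_{0,s}^2+E_{0,1}^2\TF_{2,s}^2)\TE\lesssim(1+\delta t)^{-2a}(\TE^4+\TE^2)\TE\le(c_0^3+c_0^5)(1+\delta t)^{a-\frac74}$, by applying the hypothesis $\TE\le c_0(1+\delta t)^{a-\frac34}$ to every factor. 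This power counting is where $a\le 1$ is needed, and the bootstrap closes because $c_0^3\ll c_0$.

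\textbf{Your route.} You keep that term linear in $E_{0,s}$ and bound its coefficient by $M^2\gamma^2(1+t)^{-3/2}$, using only the $(1+t)^{-3/4}$ decay of $E_{0,1}$. You then apply Gronwall separately for $s=0$ and $s=1$, each with its own weight. This is exactly the mechanism the paper uses later in the proof of Theorem \ref{theorem main}.

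\textbf{What each buys.} Your version needs no upper bound on $a$ and no power counting, and it effectively merges the proposition with the theorem. It also makes visible that $E_{0,0}$ only has to be small (it enters through $\TF_{1,s},\TF_{2,s}$), while only $E_{0,1}$ has to decay. The only extra bookkeeping is to fix $M$ first and then choose $\gamma$ with $M\gamma$ small, so that both the Gronwall factor $e^{CM^2\gamma^2}$ and the dissipative bracket $\frac14-C(\cdots)$ are controlled.

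Your Steps 1 and 2 are superfluous. The Step 3 hypothesis already supplies the smallness needed for the bracket, and \eqref{ineq dt hi a-1} plays no role in your argument.
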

\begin{proof}
    By Remark \ref{remark gh} it is equivalent to deal with $h_i$. We aim to control the following
    \begin{equation}
        \TE(t):=(1+\delta t)^aE_{0,1}(t)+(1+\delta t)^{a-\frac{1}{2}}E_{0,0}(t)=(1+\delta t)^a\sum_{i=1}^n\|h_i\|_1+(1+\delta t)^{a-\frac{1}{2}}\sum_{i=1}^n\|h_i\|_0
    \end{equation}
    with bootstrap assumption
    \begin{equation}\label{bootstrap assume}
        \TE(t)\leq c_0(1+\delta t)^{a-\frac{3}{4}}
    \end{equation}
    for a small constant $c_0$. This leads to
    \begin{equation}
        E_{0,1}(t)\leq c_0,\quad E_{0,0}(t)\leq c_0
    \end{equation}
    which implies we can choose $c_0$ small enough such that
    \begin{equation}
        \frac{1}{4}-C\left(\TF_{1,s}+\TF_{0,s}^2+E_{0,1}^2\TF_{2,s}^2\right)\geq0.
    \end{equation}    
    Then using \eqref{ineq dt hi a} and \eqref{ineq dt hi a-1} we get
    \begin{equation}
        \begin{aligned}
            \frac{d}{dt}\TE(t)\lesssim&(1+\delta t)^{a-\frac{7}{4}}\sum_{i=1}^n\|h_i\|_{L^2}+\left(\TF_{0,s}^2+E_{0,1}^2\TF_{2,s}^2\right)\TE
            \\
            \lesssim&(1+\delta t)^{a-\frac{7}{4}}\sum_{i=1}^n\|h_i\|_{L^2}+\left(E_{0,1}^4+E_{0,1}^2\right)\TE\\
            \lesssim&(1+\delta t)^{a-\frac{7}{4}}\sum_{i=1}^n\|h_i\|_{L^2}+(1+\delta t)^{-2a}(\TE^4+\TE^2)\TE
        \end{aligned}
    \end{equation}
    where in the second inequality we utilize the asymptotic approximations that $F_{0,s}\approx x^2,F_{2,s}\approx 1$ near $x=0$. To arrive at this conclusion we set $a>\frac{3}{4}$ first, using assumption \eqref{bootstrap assume}
    \begin{align}
        \frac{d}{dt}\TE(t)&\lesssim(1+\delta t)^{a-\frac{7}{4}}\sum_{i=1}^n\|h_i\|_{L^2}+(1+\delta t)^{-2a+5(a-\frac{3}{4})}(c_0^5+c_0^3)\\
        &\leq(1+\delta t)^{a-\frac{7}{4}}\left(\sum_{i=1}^n\|h_i\|_{L^2}+c_0^5+c_0^3\right)
    \end{align}
    where the second inequality holds if we take $a-\frac{7}{4}\leq-2a+5(a-\frac{3}{4})$, as well as $a\leq 1$. Therefore we take arbitrary $a$ satisfying $\frac{3}{4}<a\leq1$ with $\delta=a^{-1}$. Theorem \ref{energy conservation} provides $\sum_{i=1}^n\|h_i\|_{L^2}\lesssim\gamma$. Hence integral the above expression we gain
    \begin{align}
         \TE(t)&\lesssim(1+\delta t)^{a-\frac{3}{4}}\left(\gamma+(1+\delta t)^{-a+\frac{3}{4}}\TE(0)+c_0^5+c_0^3\right)\\
        &\leq (1+\delta t)^{a-\frac{3}{4}}(2\gamma+c_0^5+c_0^3).
    \end{align}
    This implies we can first set $\gamma$ is uniformly small such that there exists small $c_0>0$ satisfying 
\begin{equation}
    2\gamma+c_0^5+c_0^3\leq C^{-1} c_0
\end{equation}
   and then let $\gamma$ to satisfy $\TE(0)\leq2\gamma\leq c_0$. Therefore we have closed the bootstrap and proved the proposition.
\end{proof}

\begin{corollary}
    For any constant $\gamma>0$, there exists $\gamma_0>0$, such that if
    \begin{equation}
        \sum_{j=1}^n\|\tilde{g}_j\|_{L^2}+\|\tilde{g}_j\|_0+\|\tilde{g}_j\|_1\leq \gamma_0,
    \end{equation}
    then
    \begin{equation}
        \sum_{j=1}^n\|g_j(t)\|_{L^2}+\|g_j(t)\|_0+\|g_j(t)\|_1\leq \gamma
    \end{equation}
    holds for all time $t$.
\end{corollary}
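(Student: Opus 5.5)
The corollary is a smallness-propagation statement, and I would obtain it from two earlier results: the $L^2$ maximum principle (Theorem \ref{energy conservation}) and the bootstrap in the proof of Proposition \ref{prop decay s01}. Given $\gamma>0$, I will pick $\gamma_0$ so small that $\gamma_0$ is below the threshold of Proposition \ref{prop decay s01}, and so that both the $L^2$ part and the Wiener part of the sum stay below $\gamma/2$ for all times.

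\emph{$L^2$ part.} Theorem \ref{energy conservation} shows that $\sum_j(\rho_{j-1}-\rho_j)\|g_j(t)\|_{L^2}^2$ is non-increasing. The weights $\rho_{j-1}-\rho_j$ are positive fixed constants, so this weighted quantity is equivalent to $\sum_j\|g_j\|_{L^2}^2$, and hence to $\big(\sum_j\|g_j\|_{L^2}\big)^2$, with constants depending only on $n$ and $\{\rho_k\}$. This gives
\begin{equation}
\sum_{j}\|g_j(t)\|_{L^2}\le C_1\sum_j\|\tilde g_j\|_{L^2}\le C_1\gamma_0 .
\end{equation}

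\emph{Wiener norms.} I will rerun the bootstrap of Proposition \ref{prop decay s01} with the initial size $\gamma_0$ in place of the fixed threshold. The bootstrap bound $\TE(t)\le c_0(1+\delta t)^{a-3/4}$, combined with $a\le 1$ and $a-\tfrac12<a-\tfrac34+\tfrac12$, already gives $E_{0,1}(t)\le c_0$ and $E_{0,0}(t)\le c_0$ uniformly in $t$. So it suffices to show that $c_0$ can be taken proportional to $\gamma_0$.

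The closing condition is $2\gamma_0+c_0^5+c_0^3\le C^{-1}c_0$, together with $\TE(0)\le 2\gamma_0\le c_0$. Choosing $c_0=4C\gamma_0$ (and assuming $C\ge1$, so that $2\gamma_0\le c_0$), the condition reduces to $c_0^2+c_0^4\le (2C)^{-1}$. This holds once $\gamma_0$ is small enough.

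Remark \ref{remark gh} then transfers the bound from $h$ back to $g$:
\begin{equation}
\sum_j\|g_j(t)\|_0+\|g_j(t)\|_1\lesssim E_{0,0}+E_{0,1}\lesssim c_0\lesssim\gamma_0 .
\end{equation}

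\emph{Conclusion and main obstacle.} Adding the two parts gives a bound $C_2\gamma_0$ for the full sum, with $C_2$ depending only on $\{d_k\},\{\rho_k\}$. Taking $\gamma_0\le\gamma/C_2$, and also below the threshold of Proposition \ref{prop decay s01}, proves the corollary. The main obstacle is bookkeeping: I must check that the constant $C$ in the bootstrap does not depend on $c_0$ or $\gamma_0$. It does not, because $C$ comes only from $P(\xi)$, $\tla$, and the analytic functions $F_{i,s}$ near $0$. With that, the closing condition is genuinely linear in $\gamma_0$ at leading order, with only higher-order powers of $c_0$ as corrections.
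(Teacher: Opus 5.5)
Your argument is correct and follows the route the paper intends; the paper states the corollary without a separate proof. The $L^2$ part comes from the monotonicity in Theorem \ref{energy conservation}, and the Wiener part from the bootstrap in the proof of Proposition \ref{prop decay s01} with $c_0$ taken proportional to the initial size, transferred from $h$ back to $g$ via Remark \ref{remark gh}.

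One small clean-up: the uniform bounds $E_{0,1}\le c_0$ and $E_{0,0}\le c_0$ follow directly from $E_{0,1}\le c_0(1+\delta t)^{-3/4}$ and $E_{0,0}\le c_0(1+\delta t)^{-1/4}$. So the conditions $a\le 1$ and $a-\frac12<a-\frac14$ you invoke at that step are not needed.
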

We are now ready to prove Theorem \ref{theorem main}.
\begin{proof}[Proof of Theorem \ref{theorem main}]
    Choose a uniform small $\gamma$ such that Proposition \ref{prop decay s01} holds. Use \eqref{ineq dt hi a} and \eqref{ineq dt hi a-1} again to obtain
    \begin{align}
        \frac{d}{dt}\left((1+\delta t)^aE_{0,s}(t)\right)
        &\lesssim (1+\delta t)^{a-\frac{s}{2}-\frac{5}{4}}\sum_{i=1}^n\|h_i\|_{L^2}+\left(F_{0,s}^2+E_{0,1}^2F_{2,s}^2\right)(1+\delta t)^aE_{0,s}\\
        &\lesssim (1+\delta t)^{a-\frac{s}{2}-\frac{5}{4}}\sum_{i=1}^n\|h_i\|_{L^2}+\left(E_{0,1}^4+E_{0,1}^2\right)(1+\delta t)^aE_{0,s}\\
        &\lesssim (1+\delta t)^{a-\frac{s}{2}-\frac{5}{4}}\sum_{i=1}^n\|h_i\|_{L^2}+(1+\delta t)^{-\frac{3}{2}}(1+\delta t)^aE_{0,s}.
    \end{align}
   Note $(1+\delta t)^{-\frac{3}{2}}$ is integrable, thus we apply Gronwall inequality by setting $a>\frac{s}{2}+\frac{1}{4}$ to obtain
   \begin{equation}
       (1+\delta t)^aE_{0,s}(t)\lesssim E_{0,s}(0)+\int_0^t(1+\delta t)^{a-\frac{s}{2}-\frac{5}{4}}dt\lesssim (1+\delta t)^{a-\frac{s}{2}-\frac{1}{4}}
   \end{equation}
which finishes the proof.
\end{proof}

\vspace{0.2in}
\noindent
{\bf Acknowledgments}. The author is partially supported by the National Key Research and Development Program of China under the grant No. 2021YFA1001500. He thanks Jiajun Tong for helpful educational discussions and comments.
\bibliographystyle{plain} 
\bibliography{refs} 

\end{document}